\renewcommand\sout{\bgroup\markoverwith
 {\textcolor{red}{\rule[0.7ex]{3pt}{1.4pt}}}\ULon}
\definecolor{darkblue}{rgb}{0.2, 0., .7}
\newcommand\op[1]{\psi^{#1}(M; E)}
\newcommand{\clop}{\overline{\psi^{0}}(M; E)}
\newcommand{\clopn}{\overline{\psi^{-1}}(M; E)}
\newcommand{\Hom}{\operatorname{Hom}}
\newcommand{\End}{\operatorname{End}}
\newcommand{\CC}{\mathbb C}
\newcommand{\NN}{\mathbb N}
\newcommand{\RR}{\mathbb R}
\newcommand{\maB}{\mathcal B}
\newcommand{\maC}{\mathcal C}
\newcommand{\maG}{\mathcal G}
\newcommand{\maH}{\mathcal H}
\newcommand{\maK}{\mathcal K}
\newcommand{\maL}{\mathcal L}
\newcommand{\maN}{\mathcal N}
\newcommand{\maP}{\mathcal P}
\newcommand{\maQ}{\mathcal Q}
\newcommand\ede{\, := \,}
\newcommand\seq{\, = \,}
\newcommand\one{\mathbf{1}}
\newcommand{\oid}{\operatorname{Id}}
\newtheorem{theorem}{Theorem}[section]
\newtheorem{lemma}[theorem]{Lemma}
\newtheorem{proposition}[theorem]{Proposition}
\newtheorem{corollary}[theorem]{Corollary}
\theoremstyle{definition}
\newtheorem{definition}[theorem]{Definition}
\newtheorem{remark}[theorem]{Remark}
\newtheorem{example}[theorem]{Example}
\newtheorem{notation}[theorem]{Notation}
\newtheorem{NotHyp}[theorem]{Notation and hypothesis}
\newtheorem*{proposition*}{Proposition}
\newtheorem*{definition*}{Definition}
\newtheorem*{theorem*}{Theorem}
\newcommand{\Sp}{\mathrm{Sp}}
\begin{document}

\title[Simonenko's local principle and Fredholm conditions]{
A GENERAL SIMONENKO LOCAL
PRINCIPLE and FREDHOLM CONDITION FOR ISOTYPICAL
COMPONENTS
}
  
\author[A. Baldare]{Alexandre Baldare}
\email{alexandre.baldare@math.uni-hannover.de}
 \address{Institut für Analysis, Welfengarten 1, 30167 Hannover, Germany}
\urladdr{https://baldare.github.io/Baldare.Alexandre/}



  \thanks{{\em Key words:} Fredholm operator, $C^*$-algebra, 
  pseudodifferential operator, group actions, induced
    representations. {\em AMS Subject classification:} 47A53, 58J40,
    47L80, 46N20.
%
}
%
%

\begin{abstract}

In this paper, we derive, from a general Simonenko's local principle, 
Fredholm criteria for restriction to isotypical components. 
More precisely, we give a full proof, of the equivariant local principle 
for restriction to isotypical components
of invariant pseudodifferential operators announced in \cite{BCLN2}.  
Furthermore, we extend this result 
by relaxing the hypothesis made in the preceding quoted paper. 
\end{abstract}

\maketitle \tableofcontents

\section*{Introduction}

This paper is devoted to the proof and an application of a general Simonenko's local principle 
to $G$-invariant operators on closed manifolds. 
Local principles first appeared in Simonenko's work \cite{simonenko1965new} 
and more general forms appeared in \cite{allan,douglas,Roch.book2}.
Since then local principles were intensively  used to obtain Fredholm condition for singular operators, see for examples 
\cite{bottcher,Kisil_2012,Schulze.localPrinciple,semenyuta,vasilyev2018pseudo} and the references therein.
As a consequence of the general Simonenko's local principle, we derive Fredholm conditions 
for restriction of $G$-invariant pseudodifferential operators to isotypical components.

Let $G$ be a compact Lie group and denote by $\widehat{G}$ the set of isomorphism 
classes of irreducible unitary representations of $G$. If $P : \maH \rightarrow \maH'$ is a $G$-invariant 
continuous linear map between Hilbert spaces and $\alpha \in \widehat{G}$, 
then the operator $P$ induces a well defined continuous linear map between the $\alpha$-isotypical components
\begin{equation*}
\pi_\alpha(P) : \maH_\alpha \rightarrow \maH'_\alpha.
\end{equation*}

In this paper, we are interested in the case where $P$ is a pseudodifferential operator
acting between sections of two vector bundles.

Assume that our compact Lie group $G$ acts smoothly and isometrically 
on a compact Riemannian manifold $M$ and on two hermitian vector bundles $E_0$ and $E_1$. 
Furthermore, let $P : \maC^\infty(M,E_0) \rightarrow \maC^\infty(M,E_1)$
be a $G$-invariant, classical, order $m$, pseudodifferential operator on $M$.
Since $P$ is $G$-invariant, its principal symbol $\sigma_m(P)$  belongs
 to $ \maC^{\infty}(T^*M \smallsetminus \{0\}; \Hom(E_0, E_1))^G$. 
Let $G_\xi$  and $G_x$ denote the isotropy subgroups of $\xi \in
T_x^*M$ and $x \in M$, as usual. Then $G_\xi \subset G_x$ acts linearly
on the fibers $E_{0x}$ and $E_{1x}$. 
Following 
\cite{atiyahGelliptic}, denote by 
$T^*_GM$ the {\em $G$-transverse cotangent space}, see Equation \eqref{eq.transverse.ts} and by $S^*_GM := S^*M \cap
  T^*_GM $ the set of unit covectors in the $G$-transverse cotangent
  space $T^*_GM$. 

The previous set leads to the definition 
of $G$-transversally elliptic operators 
\cite{atiyahGelliptic,singer73}. 
Recall that a $G$-transversally
elliptic pseudodifferential operator on $M$ is a $G$-invariant pseudodifferential
operator whose principal symbol becomes invertible when restricted
to $T^*_GM\smallsetminus \{0\}$. Since $M$ is compact, we know
that this operators are generally not Fredholm due to the lack of full ellipticity. Nevertheless,
 the, now well known, Atiyah-Singer's result
 states that if $P$ is $G$-transversally elliptic then $\pi_\alpha(P)$
is Fredholm for any $\alpha \in \widehat{G}$, \cite{atiyahGelliptic,singer73}. 
This allows directly to define an index for $G$-transversally elliptic operators 
as an element of the $K$-homology of $C^*G$, the group $C^*$-algebra of $G$.
Furthermore, with little more work, Atiyah and Singer showed that this index is, in fact,
a $Ad$-invariant distribution on $G$. 
See also 
\cite{baldare:KK,Benameur:Baldare,Hochs,Julg,Kasparov:KKindex}
for related results and \cite{baldare:H,
BV2,PVTrans}
for index theorems on $G$-transversally elliptic operators using equivariant cohomology. 
The Fredholm property of this restrictions to isotypical component 
was the starting point for the study carried out in \cite{BCN}.

We now proceed to state the main result studied in this paper but first 
we need few more notations and definitions from \cite{BCLN1,BCLN2,BCN}.

Assume $M/G$ connected and let $K$ be a minimal isotropy subgroup of $G$, see \cite{Bredon,tomDieckTransBook}.
We shall say that $P$ is transversally $\alpha$-elliptic if for all $\xi \in (S^*_GM)^K$ the linear map
 \begin{equation*}
 \sigma_m(P)(\xi) \otimes \oid_{\alpha^*} : (E_{0\xi} \otimes \alpha^*)^K \rightarrow (E_{1\xi} \otimes \alpha^*)^K
 \end{equation*}
is invertible.

One of the main results of \cite{BCN} states that
$P$ is transversally $\alpha$-elliptic if, and only if, $\pi_\alpha(P)$ is Fredholm.
Here, we point out that the  transversal $\one$-ellipticity is related with transversal ellipticity on (singular) 
foliations 
\cite{AnSk11a, ConnesBook}.
%
For $G$ finite, this results were proved before
\cite{BCLN1, BCLN2}. 



In the present paper, we recall, in Definition \ref{def.alpha.invertible}, 
the notion of locally $\alpha$-invertible operator at $x\in M$ introduced in \cite{BCLN2} 
and we show in full generality the following result, see Theorem \ref{thm.main.Simonenko}. 

\begin{theorem*}
Assume that $M$ is a closed, smooth manifold and that $G$ is a compact Lie 
group acting smoothly on $M$.  
Let 
$P\in\psi^m(M;E_0,E_1)^G$ and $\alpha \in
\widehat{G}$. Then the following are equivalent:
\begin{enumerate}
  \item $\pi_\alpha(P) : H^s(M;E_0)_\alpha \to H^{s-m}(M;E_1)_\alpha$ is
    Fredholm for any $s \in \RR$,
  \item $P$ is transversally $\alpha$-elliptic,
  \item $P$ is locally $\alpha$-invertible.
\end{enumerate}
\end{theorem*}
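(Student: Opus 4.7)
My plan is to derive the theorem mainly from the general Simonenko local principle established earlier in the paper. The equivalence $(1)\Leftrightarrow(2)$ is already known in this generality from \cite{BCN} (and from \cite{BCLN1,BCLN2} in the finite group case), so the genuinely new content is $(1)\Leftrightarrow(3)$, which I would derive by localization through Simonenko's principle.

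The first step is to reduce to the case $m=s=0$. Since $G$-invariant elliptic order-reduction operators preserve the isotypical decomposition, Fredholmness of $\pi_\alpha(P)\colon H^s(M;E_0)_\alpha\to H^{s-m}(M;E_1)_\alpha$ is equivalent to Fredholmness of its conjugate on $L^2$, and both transversal $\alpha$-ellipticity and local $\alpha$-invertibility are preserved under such compositions. The Simonenko principle is then applied to the $C^*$-algebra $\maA := \overline{\pi_\alpha(\psi^0(M;E_0,E_1)^G)}$ acting on $L^2(M;E_0)_\alpha$, modulo the ideal $\maK$ of compact operators. The decisive central subalgebra is the image of $C(M/G)\cong C^\infty(M)^G$ inside the multiplier algebra of $\maA/\maK$: $G$-invariant functions commute with the $G$-action, hence preserve every $\alpha$-isotypical component, and they commute modulo compacts with every element of $\maA$ by the standard symbolic commutator calculation. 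The Gelfand spectrum of this central subalgebra is $M/G$. Applying the general Simonenko principle then yields that $\pi_\alpha(P)$ is Fredholm if and only if its image in each local quotient $\maA_{[x]}$ indexed by $[x]\in M/G$ is invertible, and by design this is exactly the condition of local $\alpha$-invertibility from Definition \ref{def.alpha.invertible}.

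The main obstacle I anticipate is the identification of the Simonenko local quotients $\maA_{[x]}$ with the concrete local model used to define local $\alpha$-invertibility in \cite{BCLN2}. This passes through the $G$-equivariant slice theorem: near any orbit $Gx$ one models $M$ by the homogeneous bundle $G\times_{G_x}U$, uses Frobenius reciprocity to translate $\alpha$-isotypical data for $G$ into the appropriate isotypical data for the isotropy group $G_x$, and then checks that the localized operator --- up to compact perturbation --- is precisely the model operator from Definition \ref{def.alpha.invertible}. A secondary issue is the verification that the hypotheses of the general Simonenko principle from the preceding sections are satisfied in this equivariant setting, which is where the broad generality of that principle is actually exploited.
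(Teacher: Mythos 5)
Your overall architecture matches the paper's: $(1)\Leftrightarrow(2)$ is imported from \cite{BCN} (Theorem \ref{thm.main1}), and $(1)\Leftrightarrow(3)$ is derived from the general Simonenko principle (Proposition \ref{prop.simonenko.gen}) applied over $T=M/G$ acting on $\maG=\maH_\alpha=L^2(M;E)_\alpha$, with $\maC(M)^G$ as the localizing algebra. So the plan is correct in outline. However, there are two substantive issues, one of which is a genuine gap that causes the proof to fail as written.

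The genuine gap is the treatment of the hypotheses of the general Simonenko principle, which you flag as a ``secondary issue'' and leave unexamined. Notation and Hypothesis \ref{NotHyp} require the representation $\maC(M/G)\to\maL(\maH_\alpha)$ to be faithful, non-degenerate, and to meet $\maK(\maH_\alpha)$ only in $\{0\}$. These fail in general, and the statement you propose to prove (the version from the introduction, without excising anything) is actually \emph{false} as written: the paper's own example with $M=G$ and $P=0$ gives $\pi_\one(0)\colon\CC\to 0$ Fredholm yet not locally $\one$-invertible. The paper isolates two obstructions: the clopen $\maP=\{x:(T^*_GM)_x=0\}$ of zero-dimensional slices (on which $\maC(M/G)$ lands inside the finite-rank operators, violating the ``no compacts'' hypothesis), and the clopen $\maN_\alpha$ of points whose slice tubes carry no $\alpha$-isotypical sections (on which $\maC(M/G)$ acts by zero, violating faithfulness). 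The paper devotes Lemmas \ref{lemma.finite.component}, \ref{lemma.pathological:points}, \ref{lem.L2.tube.non.zero}, \ref{lemma.cap.compact} and Remark \ref{rem.N.Fredholm} to showing these sets are clopen, that Fredholmness of $\pi_\alpha(P)$ is insensitive to them, and that after excising $\maP\cup\maN_\alpha$ the representation $\maC(M/G)\to\maL(\maH_\alpha)$ does satisfy Hypothesis \ref{NotHyp} (including the strong-convergence-to-zero property of Definition \ref{def.property.strong.CV}). This is the real technical content of the localization step, and the correct statement of the theorem accordingly reads ``locally $\alpha$-invertible on $M\smallsetminus\maP$.'' Your proposal never identifies these obstructions, so the core of the argument is missing.

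Conversely, what you flag as the ``main obstacle'' --- identifying the Simonenko local quotients $\maA_{[x]}$ with a concrete slice model via Frobenius reciprocity --- is not needed for $(1)\Leftrightarrow(3)$ in this paper, because Definition \ref{def.alpha.invertible} already defines local $\alpha$-invertibility of $P$ at $x$ to mean local invertibility of $\pi_\alpha(P)$ at $Gx\in M/G$ in the sense of Definition \ref{def.loc.inv}. No further identification is required; the slice-theoretic translation to $G_x$-isotypes belongs to the proof of $(1)\Leftrightarrow(2)$ in \cite{BCN}, which you are (correctly) citing rather than reproving. A minor additional remark: the paper applies Proposition \ref{prop.simonenko.gen} inside the essential commutant $\Psi_{M/G}(\maH_\alpha)$ rather than inside the smaller algebra $\overline{\pi_\alpha(\psi^0(M;E)^G)}$ you propose; the inclusion $\pi_\alpha(\Psi^G_M(\maH))\subset\Psi_{M/G}(\maH_\alpha)$ is what makes the principle applicable, and using the larger commutant avoids having to verify that your $\maA$ is unital and contains $\maC(M/G)$ with the correct compatibilities, though either choice can be made to work.
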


Notice that in \cite{BCLN2}, the equivalence between $(1)$ and $(2)$ was stated without proof under the hypothesis that $\dim G< \dim M$. 
Moreover, the triple equivalence was then deduced in the case of finite group using the main result of \cite{BCLN2}. 
Here care is taken to state it in full generality and relax the hypothesis $\dim G < \dim M$. 
This proposition enlightens the results from \cite{BCLN1,BCLN2,BCN} in the sense that it explains the local computations done.

The previous theorem, as well as intermediate results in this paper, 
were obtained during discussions with R. Côme, M. Lesch and V. Nistor. 


 We point out that the Fredholm conditions obtained  in this paper  
are closely related to the ones in
\cite{SavinSchrohe}, for $G$-operators, and the ones in \cite{BL92},
for complexes of operators. 
Fredholm conditions were also investigated in different forms in 
\cite{BoutetdeMonvel,Schrohe99,schulze2001algebra,sternin1998general} for boundary problems and in  
\cite{chandler,favard,Ge,hagger2019limit,LindnerSeidel,Ma2,MaNi,RRS}
 using  techniques of limit operators and also $C^*$-algebras methods. 
 The techniques of limit operators are similar to the one used in \cite{BCN} 
 to obtain the Fredholm criterion for $\alpha$-transversally elliptic operator, 
 see also Section \ref{ssec.alpha.symbol}. 
Recent developments on singular operators including groupoid and $C^*$-algebras were 
accomplished in 
\cite{Yuncken,bohlen1,CarrilloRouseLescure1,
CarrilloRouseSo,CCQ,CNQ,Remi,DS1,LMN,LescureManchonVassout}.
 \\

\medskip

\noindent
{\em{Acknowledgements.}} I would like to express my gratitude to V. Nistor for useful discussions,
suggestions and encouragement during the redaction of this paper.
I would also like to thank M.-T. Benameur, R. Côme, P. Carrillo-Rouse, M. Lesch, P.-E. Paradan, M. Puschnigg and E. Schrohe 
for many helpful discussions. 
Last but not least, I would like to thank the referees for several very useful comments, suggestions and references.
This study was partially supported by DFG, project
SCHR 319/10-1.

\section{Preliminaries}
\label{sec.Preliminaries}

This section is devoted to background material and results. The reader can find
more details in \cite{BCLN1, BCLN2, BCN}. 
The reader familiar with
\cite{BCLN1,BCLN2,BCN} can skip this section at a first reading.

\subsection{Group actions}
\label{ssec.group.r}

Throughout the paper, we let $G$ be a \emph{compact} Lie group.
Assume that $G$ acts on a space $X$ and that $x \in X$, then $G x$ is the $G$ orbit of $x$ and
\begin{equation*}
   G_x := \{ g \in G \, \vert \ g x = x \} \subset
   G
\end{equation*}
the isotropy group of the action at $x$.

If $H \subset G$ is a subgroup, then
$X_{(H)}$ will denote the set of elements of $X$ whose isotropy
$G_{x}$ is conjugated to $H$ in $G$
and $G\times_{H} X$ the space
\begin{equation*}
G \times_H X  \ede (G \times  X)/\sim, 
\end{equation*}
where $(g h,x)\sim (g ,hx)$ for all $g \in G,
h\in H$, and $x\in X$.  

Let $V$ and $W$ be locally convex spaces
and $\maL(V; W)$ be the set of continuous, linear maps $V \to W$.
We let $\maL(V) := \maL(V; V)$. 
A representation of $G$ on $V$ is a continuous group morphism $G \to \maL(V)$,
where $\maL(V)$ is equipped with the strong operator topology. Said differently, the
map $G \times V \rightarrow V$ given by $(g, v) \mapsto gv$ is continuous and $v \mapsto gv$ is linear. 
We shall also call $V$ a $G$-module.
 
For any two $G$-modules $\maH$ and $\maH'$, we shall denote by
\begin{equation*}
    \Hom_{G}(\maH, \maH') \seq \Hom(\maH, \maH')^G \seq
    \maL(\maH, \maH')^G,
\end{equation*}
the set of continuous linear maps $T : \maH \to \maH'$ that commute
with the action of $G$, that is, $T (g v) = g T(v)$
for all $v \in \maH$ and $g\in G$.

Let $\maH$ be a $G$-module and $\alpha$ an irreducible
representation of $G$. Then $p_\alpha$ will denote the $G$-invariant
projection onto the {\em $\alpha$-isotypical component} $\maH_\alpha$ of
$\maH$, defined as the largest (closed) $G$-submodule of $\maH$
that is isomorphic to a multiple of $\alpha$. In other words,
$\maH_\alpha$ is the sum of all $G$-submodules of $\maH$ that are
isomorphic to $\alpha$. Notice that $\maH_{\alpha}
\simeq \alpha \otimes \Hom_{G}(\alpha, \maH)$ and 
\begin{equation*}\label{eq.isotypical}
  \maH_\alpha \, \neq \, 0 \ \Leftrightarrow \ \Hom_G(\alpha,
  \maH) \, \neq \, 0 \ \Leftrightarrow \ \Hom_G(\maH, \alpha) \,
  \neq \, 0 .
\end{equation*}
Recall that we denote by $\widehat{G}$ the set of equivalence classes of irreducible unitary representations of $G$.
Let $\chi_\alpha$ be the character of $\alpha \in \widehat{G}$ 
and  $z_\alpha:=\dim \alpha \chi_\alpha \in C^*G$ be the central projection 
associated in the group $C^*$-algebra $C^*G$ of $G$. 
Then $p_\alpha$ is the image of $z_\alpha$ induced by the group action on $\maH$.
If $T \in \maL(\maH)^G$
then
$T(\maH_\alpha) \subset \maH_\alpha$ and we let
\begin{equation*}
    \pi_\alpha : \maL(\maH)^G \to \maL(\maH_\alpha) \,, \quad
    \pi_\alpha(T) \ede  p_\alpha  T\vert_{\maH_\alpha}\,,
\end{equation*}
be the associated morphism. The morphism $\pi_\alpha$ will play an essential
role in what follows.

As before, we consider a compact Lie group $G$ and we now assume that $G$ acts by isometries on
a closed Riemannian manifold $M$.
Let $TM$ and $T^*M$ be respectively the {\em tangent} and {\em cotangent bundle} on $M$ 
and recall that they can be identified using the $G$-invariant Riemannian metric on $M$.
Let $S^*M$ denote the {\em unit cosphere bundle} of 
$M$, that is, the set of unit vectors in $T^*M$, as usual. 
Denote by $\mathfrak{g}$ the Lie algebra of
$G$. Then any $Y \in \mathfrak{g}$ defines as usual the vector field
$Y_M$ given by $Y_M(m)=\frac{d}{dt}_{|_{t=0}} e^{tY}\cdot
m$. Denote by $\pi : T^*M \rightarrow M$ the canonical projection and
let us introduce as in \cite{atiyahGelliptic} the {\em $G$-transversal
space}
\begin{equation}\label{eq.transverse.ts}
    T^*_G M \ede \{\xi \in T^*M\ |\ \xi(Y_M(\pi(\xi)))=0,
    \forall Y\in \mathfrak{g}\}.
\end{equation}

We denote by $T_GM$ the image of $T^*_GM$ in $TM$ obtained using the Riemannian metric. 
In other words, $T_GM$ is the orthogonal to the orbits in $TM$.  
Finally, let $S^*_GM$ be the set of unit covectors in $T^*_GM$, that is $S^*_GM=S^*M \cap T^*_GM$.

%
Recall that if 
$M/G$ is connected, there is a minimal isotropy subgroup $K$ such that any isotropy subgroup of $G$ acting on $M$ contains
a subgroup conjugated to $K$ and $M_{(K)}$ is an open dense submanifold of $M$ called the principal orbit bundle, see 
\cite[Section IV. 3]{Bredon} and \cite[Section I. 5]{tomDieckTransBook}.

\subsection{Pseudodifferential operators}
\label{sub.psdo}
Let $G$ be a compact Lie group acting smoothly by isometries on a compact, Riemannian manifold without boundary $M$ as before.
We shall denote by $\op{m}$ the space of
order $m$, {\em classical} pseudodifferential operators on $M$.
Let $\clop$ and $\clopn$ denote the respective norm closures of $\op{0}$ 
and $\op{-1}$. The action of $G$ then extends to a continuous
action on $\op{m}$, $\clop$, and $\clopn$, see \cite{AS1} for example. 
We shall denote by
$\maK(\maH)$ the algebra of compact operators acting on a Hilbert
space $\maH$. Of course, we have $\clopn = \maK(L^2(M; E))$.

 We shall
denote, as usual, by $\maC(S^*M; \End(E))$ the set of continuous
sections of the {\em lift} of the vector bundle $\End(E) \to M$ to
$S^*M$.
We have the following well known exact sequence
\begin{equation*}\label{cor.full.symbol}
   0 \, \to \, \maK(L^2(M; E))^G \, \to \, \clop^G\,
   \stackrel{\sigma_{0}}{-\!\!\!\longrightarrow}\,
   \maC(S^*M; \End(E))^G \, \to \, 0 \,.
\end{equation*}
See, for instance, \cite[Corollary 2.7]{BCLN1},
where references are given.

Recall that a $G$-invariant classical pseudodifferential operator $P$
of order $m$ is said {\em elliptic} if its principal
symbol is invertible on $T^* M\smallsetminus \{0\}$
and  {\em $G$-transversally elliptic} if its principal
symbol is invertible on $T^*_G M\smallsetminus \{0\}$
\cite{atiyahGelliptic,AS1, PVTrans}, see Equation \eqref{eq.transverse.ts} for the definition of $T^*_GM$.

We may now state the classical result of
Atiyah and Singer \cite[Corollary 2.5]{atiyahGelliptic}. 
\begin{theorem}[Atiyah-Singer \cite{atiyahGelliptic,singer73}]
\label{Thm.atiyahGell}
Let $P$ be a $G$-transversally elliptic operator. Then, for every irreducible
representation $\alpha\in \widehat{G}$, $\pi_\alpha(P) : H^s(M;
E_0)_\alpha \, \to \, H^{s-m}(M; E_1)_\alpha,$ is Fredholm.
\end{theorem}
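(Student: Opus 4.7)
The plan is to construct a pseudodifferential parametrix for $\pi_\alpha(P)$ and reduce Fredholmness to a compactness lemma on isotypical components. Since $\sigma_m(P)$ is $G$-invariant and invertible on $T^*_GM \smallsetminus \{0\}$, I would use a $G$-equivariant partition of unity together with a conic neighborhood of the closed $G$-invariant set $T^*_GM \smallsetminus \{0\}$ to extend $\sigma_m(P)^{-1}$ to a $G$-invariant symbol $q \in S^{-m}(T^*M; \Hom(E_1, E_0))$ that agrees with $\sigma_m(P)^{-1}$ on a conic neighborhood of $T^*_GM \smallsetminus \{0\}$ and is cut off outside a slightly larger one. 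Quantizing yields $Q \in \psi^{-m}(M;E_1,E_0)^G$ such that $R_1 := QP - I$ and $R_2 := PQ - I$ are $G$-invariant operators of order $0$ whose principal symbols vanish on $T^*_GM \smallsetminus \{0\}$; a standard asymptotic expansion lets me iteratively improve $Q$ so that $R_1$ and $R_2$ lie in $\psi^{-N}$ for any prescribed $N$, while maintaining the vanishing on $T^*_GM$.

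The heart of the argument is the following compactness lemma, which I would prove separately: \emph{if $R \in \psi^{0}(M;E)^G$ has principal symbol vanishing on $T^*_GM \smallsetminus \{0\}$, then $\pi_\alpha(R)$ is compact on $L^2(M;E)_\alpha$ for every $\alpha \in \widehat{G}$.} The intuition is that the defining equations of $T^*_GM$ are precisely the linear forms $\xi \mapsto \xi(Y_M(\pi(\xi)))$ for $Y \in \mathfrak{g}$, so on the principal stratum one can divide and write the principal symbol of $R$ as $\sum_i \xi(Y_{i,M}(x))\, r_i(x,\xi)$ with $r_i$ of order $-1$, giving $R \equiv \tfrac{1}{i}\sum_i Y_{i,M} \circ Q_i$ modulo a term of order $-1$. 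The essential point is that $Y_{i,M}$, though unbounded on $L^2(M;E)$, acts on sections as the derivative at the identity of the $G$-action, and under the decomposition $\maH_\alpha \simeq \alpha \otimes \Hom_G(\alpha,\maH)$ this derivative is $d\alpha(Y_i) \otimes \oid$, which is \emph{bounded} on $\maH_\alpha$ since $\alpha$ is finite-dimensional. Therefore $\pi_\alpha(Y_{i,M} \circ Q_i)$ is a bounded operator composed with an order $-1$ operator (compact on a closed manifold), hence compact, and the residual $\psi^{-1}$ term is compact as well.

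Combining the two steps shows that $\pi_\alpha(Q)$ is a two-sided inverse of $\pi_\alpha(P)$ modulo compacts at the $L^2$-level, so $\pi_\alpha(P)$ is Fredholm on the $\alpha$-isotypical component of $L^2$; the general Sobolev statement for arbitrary $s \in \RR$ follows by conjugating with an invertible, elliptic, $G$-invariant operator of order $s$ relating $H^s(M;E)$ to $L^2(M;E)$, which preserves isotypical components. The main obstacle I anticipate is the compactness lemma. Making the division argument genuinely $G$-equivariant is delicate because the vectors $Y_{i,M}(x)$ do not span a subbundle of constant rank across singular orbit types, so the argument must either be globalized via a frame on the principal stratum with careful control near the lower strata, or carried out at the level of full symbols using an asymptotic-series division; in either case one must verify that every lower-order remainder produced by the division is again either compact on $L^2$ or has symbol vanishing on $T^*_GM$, so that the induction on the order of $R$ closes.
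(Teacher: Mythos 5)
The paper does not prove this theorem: it is quoted verbatim from Atiyah's lecture notes and Singer's survey and cited as such, so there is no "paper's own proof" to compare against. Your proposal is a reconstruction of Atiyah's original argument (\cite{atiyahGelliptic}, Ch.\ 1), and it captures its two-step structure correctly: first build a $G$-invariant $Q$ that inverts the symbol on a conic neighborhood of $T^*_GM\smallsetminus\{0\}$, and then reduce to the key compactness lemma for invariant operators whose symbol vanishes on $T^*_GM$.

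Two remarks on the compactness lemma. First, a minor imprecision: after the symbolic division you obtain $R\equiv\sum_i D_{Y_i}\circ Q_i\ \mathrm{mod}\ \psi^{-1}$ where $D_{Y_i}$ is the derived representation (whose principal symbol is $i\xi(Y_{i,M})$); the individual terms $D_{Y_i}Q_i$ are not $G$-invariant, so $\pi_\alpha$ as defined in the paper does not apply to them. The correct reading is $p_\alpha R\, p_\alpha=\sum_i p_\alpha D_{Y_i}Q_i\, p_\alpha+\text{compact}$, and since $p_\alpha$ lies in the commutant of the $G$-action it commutes with each $D_{Y_i}$; hence $p_\alpha D_{Y_i}Q_i\, p_\alpha=D_{Y_i}\,(p_\alpha Q_i\, p_\alpha)$, with $p_\alpha Q_i\, p_\alpha:\maH_\alpha\to H^1(M;E)_\alpha$ bounded and $D_{Y_i}=d\alpha(Y_i)\otimes\oid$ bounded on $\maH_\alpha$, giving compactness. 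Second, the obstacle you flag is the genuine technical heart: the vector fields $Y_{i,M}$ do not span a subbundle of constant rank, so the division of $\sigma_0(R)$ by the functions $\xi\mapsto\xi(Y_{i,M}(x))$ cannot be done smoothly across orbit-type strata in one stroke. Atiyah handles this by localizing along tubes (slices) and working first for tori, reducing the general case to a maximal torus; without such a stratified or slice-by-slice argument your induction does not close. So the sketch is faithful and essentially correct in outline, but the division step needs the additional machinery from Atiyah's notes (or an equivalent $C^*$-algebraic argument along the lines of \cite{BCN}) to be complete.
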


Let us recall the following fact which is a direct consequence 
of the fact that $G$ acts by unitary multiplier on $\maK(\maH)$.  

\begin{proposition} \label{prop.image}
We have natural isomorphisms
\begin{align*}
  p_\alpha\overline{\psi^{-1}}(M;E)^G  & \simeq
  \pi_\alpha(\overline{\psi^{-1}}(M;E)^G) 
  & = \pi_\alpha(\maK(L^2(M; E))^G) =
  \maK(L^2(M;E)_\alpha)^G\,,
\end{align*}
where the first isomorphism map is simply $\pi_\alpha$ and
\begin{equation*}
  \maK(L^2(M; E))^G = \clopn^G \simeq \oplus_{\alpha \in
    \widehat{G}}\maK(L^2(M; E)_\alpha)^G\,.
\end{equation*}
\end{proposition}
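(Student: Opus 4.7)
The plan is to unpack the chain of identifications one arrow at a time, using only (i) the well-known identification $\clopn = \maK(L^2(M;E))$, which is already recorded in the text, and (ii) the fact that $G$ acts on $L^2(M;E)$ by unitaries, so that each isotypical projection $p_\alpha$ is a $G$-invariant orthogonal projection and therefore commutes with every $T \in \maL(L^2(M;E))^G$.

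First I would establish the isomorphism $p_\alpha \clopn^G \xrightarrow{\sim} \pi_\alpha(\clopn^G)$. Because $p_\alpha$ is central in $\maL(L^2(M;E))^G$, one has $p_\alpha T = T p_\alpha = p_\alpha T p_\alpha$ for any $G$-invariant $T$; viewing the right-hand side as an endomorphism of $L^2(M;E)_\alpha$ (extended by $0$ on the orthogonal complement) gives exactly $\pi_\alpha(T)$, so the assignment $p_\alpha T \mapsto \pi_\alpha(T)$ is the desired natural isomorphism. The second identity in the display, $\pi_\alpha(\clopn^G)=\pi_\alpha(\maK(L^2(M;E))^G)$, is then immediate from $\clopn=\maK(L^2(M;E))$.

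Next I would prove $\pi_\alpha(\maK(L^2(M;E))^G) = \maK(L^2(M;E)_\alpha)^G$. The inclusion $\subset$ is automatic since the compression of a compact $G$-invariant operator to a $G$-stable closed subspace is again compact and $G$-invariant. For the reverse inclusion, note that $L^2(M;E)_\alpha^\perp = \widehat{\bigoplus}_{\beta \neq \alpha} L^2(M;E)_\beta$ is $G$-stable; hence any $K \in \maK(L^2(M;E)_\alpha)^G$ extended by $0$ on $L^2(M;E)_\alpha^\perp$ defines a $G$-invariant compact operator on $L^2(M;E)$ whose image under $\pi_\alpha$ is $K$.

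Finally, for the direct sum decomposition I would use the Peter--Weyl decomposition $L^2(M;E) = \widehat{\bigoplus}_{\alpha \in \widehat{G}} L^2(M;E)_\alpha$ as a Hilbert sum of $G$-stable closed subspaces. Any $T \in \maL(L^2(M;E))^G$ commutes with every $p_\alpha$ and therefore splits as $T = \widehat{\bigoplus}_\alpha T_\alpha$ with $T_\alpha := T|_{L^2(M;E)_\alpha} \in \maL(L^2(M;E)_\alpha)^G$; compactness of $T$ forces $T_\alpha$ to be compact for each $\alpha$ and $\|T_\alpha\| \to 0$, which is exactly the defining property of the $C^*$-algebraic ($c_0$) direct sum. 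Conversely, a family $(T_\alpha)$ with these properties assembles into a $G$-invariant compact operator on $L^2(M;E)$, giving the stated isomorphism. The only real subtlety—so far as there is one—is fixing the convention that the symbol $\oplus$ here denotes the $c_0$-sum of $C^*$-algebras rather than the algebraic direct sum; once this is understood, every step above is routine Hilbert-space bookkeeping.
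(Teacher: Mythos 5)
Your proof is correct, and it relies on exactly the mechanism the paper flags as the underlying reason for the statement (``a direct consequence of the fact that $G$ acts by unitary multiplier on $\maK(\maH)$''), namely that the unitarity of the $G$-action makes $p_\alpha$ central in $\maL(L^2(M;E))^G$, after which the identifications and the $c_0$-direct-sum decomposition are routine. The paper itself does not spell out a proof but defers to \cite[Section~3]{BCLN1}; your argument fills in that reference along what appear to be the intended lines, and your remark that $\oplus$ must be read as the $c_0$-sum is the right thing to make explicit.
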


\begin{proof}
See, for example, \cite[Section 3]{BCLN1} for a proof.
\end{proof}

\subsection{$\alpha$-transversally elliptic operators}
\label{ssec.alpha.symbol}
Let $G$ be a compact Lie group acting smoothly by isometries on a compact, Riemannian manifold without boundary $M$ as before.
Let $P : \maC^\infty(M,E_0) \rightarrow \maC^\infty(M,E_1)$ be a $G$-invariant pseudodifferential operator.
Let $p : M \rightarrow M/G$ be the projection.
Let  $ M/G =\bigsqcup_{i\in I} C_i$ be the decomposition into connected components of $M/G$.
Notice that $I$ is finite and let $K_i \subset G$ be a minimal isotropy group for $M_i:=p^{-1}(C_i)$. 
Denote by $(S^*_GM_i)^{K_i}$ the subset of $K_i$-invariant elements of $S^*_GM_i$, see Equation \eqref{eq.transverse.ts}.

\begin{definition}\cite{BCN}\label{def.chi.ps}
We shall say that $P \in \psi^m(M; E_0, E_1)^{G}$ is {\em transversally 
  $\alpha$-elliptic} if for any $i\in I$, and $\xi \in (S^*_GM_i)^{K_i}$,
  \begin{equation*}
  \sigma_m(P)(\xi) \otimes \oid_{\alpha^*} : (E_{0\xi} \otimes \alpha^* )^{K_i} \rightarrow (E_{1\xi} \otimes \alpha^* )^{K_i}
  \end{equation*}
   is invertible. 
\end{definition}

Let us recall the main result of \cite{BCN}, see also \cite{BCLN1,BCLN2} for finite groups.

\begin{theorem}\label{thm.main1}\cite{BCN}
Let $m\in \RR$, $P \in \psi^m(M; E_0, E_1)^{G}$ and $\alpha \in \widehat{G}$. Then
\begin{equation*}
    \pi_\alpha(P) : H^s(M; E_0)_\alpha \, \to \, H^{s-m}(M;
    E_1)_\alpha
\end{equation*}
is Fredholm if, and only if $P$ is transversally $\alpha$-elliptic.
\end{theorem}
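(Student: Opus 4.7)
The plan is to derive this Fredholm criterion as a direct application of the general Simonenko local principle advertised in the present paper, applied to the commutative subalgebra $C(M/G)\subset C(M)^G$. As a preliminary, I would compose $\pi_\alpha(P)$ with an invertible $G$-equivariant elliptic pseudodifferential operator of order $-m$ (e.g.\ $(1+\Delta_{E_1})^{-m/2}$) to reduce to the order zero case, where $\pi_\alpha(P)$ acts as a bounded operator on $L^2(M;E_0)_\alpha$.

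By Proposition \ref{prop.image}, $\pi_\alpha(P)$ is Fredholm iff its class is invertible in the quotient $\pi_\alpha(\clop^G)/\maK(L^2(M;E_0)_\alpha)^G$. Via the principal symbol, this quotient is identified with a $C^*$-algebra $\maB_\alpha$ built from $\maC(S^*M;\End(E))^G$ together with the isotypical projection $p_\alpha$. Since $C(M)^G\simeq C(M/G)$ pulls back along the cosphere projection to a central $C^*$-subalgebra of $\maB_\alpha$, Simonenko's local principle applies and gives: the class $[\pi_\alpha(P)]$ is invertible in $\maB_\alpha$ iff its localization at every point $[x]\in M/G$ is invertible.

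The key computation is then to identify local invertibility at $[x]$ with the transverse $\alpha$-ellipticity condition at $x$. Using the slice theorem, a $G$-tube around the orbit $Gx$ is identified with an induced bundle $G\times_{G_x}\maD$, and a routine representation-theoretic calculation based on Frobenius reciprocity $\Hom_G(\alpha,\Ind_{G_x}^G V)\simeq \Hom_{G_x}(\alpha_{|G_x},V)$ shows that the local algebra at $[x]$ is a matrix algebra acting on the finite-dimensional spaces $(E_{i\xi}\otimes\alpha^*)^{G_x}$, with $\xi$ ranging over the $G_x$-fixed unit covectors transverse to the orbit at $x$. Local invertibility at $[x]$ is then equivalent to invertibility of $\sigma_m(P)(\xi)\otimes\oid_{\alpha^*}$ on each $(E_{0\xi}\otimes\alpha^*)^{G_x}$.

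To conclude one must reduce the family of conditions (one for each isotropy type encountered in $M$) to the single condition on $(S^*_GM_i)^{K_i}$ of Definition \ref{def.chi.ps}. Using that $M_{(K_i)}$ is open and dense in each component $M_i$, one approximates any $\xi_0$ of non-principal isotropy $H\supsetneq K_i$ by $\xi_n\in (S^*_GM_i)^{K_i}$ and uses continuity of $\sigma_m(P)\otimes\oid_{\alpha^*}$ and the inclusion $(E_{0\xi}\otimes\alpha^*)^H\subset (E_{0\xi}\otimes\alpha^*)^{K_i}$ to transfer invertibility. This semicontinuity step is what I expect to be the main obstacle, because the invariant subspace jumps in dimension as the isotropy grows: one must produce uniform lower bounds for the symbol on the $K_i$-invariants along an approaching sequence in order to extract a bounded inverse on the limit $H$-invariants, and some careful representation-theoretic matching — decomposing the relevant fiber representations compatibly with the restriction from $G_{\xi_n}$ to $G_{\xi_0}$ — will be needed to make this rigorous.
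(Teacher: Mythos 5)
The paper does not actually prove Theorem~\ref{thm.main1}: it is quoted as the main result of the reference \cite{BCN}, and the only ``proof'' is the citation. So your sketch is not a re-derivation of an argument in this paper; it is an attempt to replace the citation by an argument built on the Simonenko machinery developed here. That is a legitimate and interesting direction, and it is in fact how the paper later uses the Simonenko principle to establish the extra equivalence $(1)\Leftrightarrow(3)$ in Theorem~\ref{thm.main.Simonenko}, while taking $(1)\Leftrightarrow(2)$ from \cite{BCN} as a black box.

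However, the core step in your outline is not ``routine,'' and the point you flag at the end is precisely where the argument does not close. Two concrete issues. First, the identification of the local algebra at an orbit $[x]$ is only simple when $G_x$ is (conjugate to) the minimal isotropy $K$: then $G_x$ acts trivially on the slice $D$, the tube reduction $L^2(W_x,E)_\alpha\cong\alpha\otimes L^2(D,E_x\otimes\alpha^*)^{G_x}$ leaves an honest pseudodifferential problem on $D$ with coefficient bundle of fiber $(E_x\otimes\alpha^*)^{G_x}$, and local invertibility is governed by $\sigma_m(P)(\xi)\otimes\oid_{\alpha^*}$ on $(E_{0\xi}\otimes\alpha^*)^{K}$. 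At a non-principal orbit, $G_x$ acts nontrivially on $D$, and after the Frobenius reduction one is left with an equivariant Fredholm problem for $G_x$-invariant operators on a non-compact slice; the ``local algebra at $[x]$'' is not a finite matrix algebra over the $G_x$-fixed covectors but involves the full $G_x$-transversal symbol on $S^*D$. Resolving this recursively over the orbit-type stratification is the actual content of \cite{BCLN1,BCLN2,BCN}. Second, even granting the local identifications, passing from invertibility over the open dense principal stratum to the closed condition on $(S^*_GM_i)^{K_i}$ requires a uniform bound on the inverse along sequences approaching smaller strata; this lower semicontinuity is not automatic and is exactly what the primitive-ideal / limit-operator analysis of \cite{BCN} provides. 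Your proposal correctly locates the difficulty but does not supply the missing estimate or the stratified induction, so as written it does not constitute a proof of the statement, only a plausible reduction of it to the hard part of \cite{BCN}.
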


We now briefly relate Definition \ref{def.chi.ps} with the notion of limit operators, 
see \cite{chandler,favard,Ge,hagger2019limit,LindnerSeidel,Ma2,MaNi,RRS}.
In order to simplify notations, let us assume $\alpha=\one$ 
and $M/G$ connected and let $K$ be the minimal isotropy subgroup.
We follow \cite{BCN}.
Let $(x_0 ,\xi) \in S^*_GM_{(K)}$ and assume $G_{x_0}=K$. 
Let $U \subset (T_GM)_{x_0}  \simeq (T^*_GM)_{x_0}$ be a slice 
at $x_0$, let $W = G\exp_{x_0}(U) \cong G/K \times U$ be the
associated tube around $x_0$, and let
\begin{equation*}
     \eta \in E_{x_0}^{K}\ \mbox{ and } \ f \in \maC^{\infty}_c(U)\,, \ f(x_0) = 1 \,.
\end{equation*}
Notice that $(S^*_{K} U)_{x_0} = S^*_{x_0} U$,
because $x_0 \in M_{(K)}$ and hence $\xi \in S^*_{x_0} U$.
Let us define $s_\eta \in \maC^{\infty}_c(W; E)^G$ and $e_t \in \maC^\infty(W)^G$ by
$s_\eta( g \exp_{x_0} (y) ) \ede f(y) g\eta$ and 
$e_t( g \exp_{x_0} (y)) \ede e^{\imath t \langle y, \xi \rangle}$, $t \in \RR$.
In other words, they are the functions on $W$
extending the functions $y \mapsto f(y) \eta$ and $y \mapsto
e^{\imath t \langle y, \xi \rangle}$ defined on $U \subset T_{x_0}U 
= (T_KU)_{x_0}$ by $G$-invariance via $W = G
\exp_{x_0}(U)$.
Using oscillatory testing techniques, see, for instance \cite{Hormander3, Treves}, 
the following proposition can be shown, see \cite{BCN}.

\begin{proposition}\label{prop.injective}
Assume that $0 \neq \eta \in E_{x_0}^K$. Then, for 
every $P \in \psi^0(M; E)$, we have $\lim_{t \to \infty } P ( e_t s_\eta ) (x_0) \seq
  \sigma_0(P)(\xi)\eta$.
In particular, if $P\in\psi^0(M; E)^G$, then
\begin{equation*}
  \lim_{t \to \infty } \pi_{\one}(P) ( e_t s_\eta ) (x_0) \seq
  \sigma_0(P)(\xi)\eta =: \, \pi_{(\xi, \one)} \big (\sigma_0(P)\big )
  \eta\,.
\end{equation*}
\end{proposition}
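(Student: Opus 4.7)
The plan is to reduce the claim to the classical oscillatory testing formula for pseudodifferential operators in a local Euclidean chart around $x_0$. Although $s_\eta$ and $e_t$ are defined globally by $G$-equivariant extension from the slice $U$, only their germ at $x_0$ (modulo a smoothing contribution that will be killed by non-stationary phase) matters for the evaluation of $P(e_t s_\eta)(x_0)$, so the $G$-invariance plays no role in the main computation.

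Concretely, I would pick a cutoff $\chi \in \maC_c^\infty(W)$ equal to $1$ in a neighborhood of $x_0$ inside the tube $W = G \exp_{x_0}(U)$ and split $e_t s_\eta = \chi e_t s_\eta + (1-\chi) e_t s_\eta$. The Schwartz kernel of $P \in \psi^0(M; E)$ is smooth off the diagonal, so a standard non-stationary phase argument (repeated integration by parts against $e^{it\langle y, \xi\rangle}$, as in \cite{Hormander3, Treves}) yields $P\bigl((1-\chi) e_t s_\eta\bigr)(x_0) = O(t^{-N})$ for every $N$. For the remaining piece, use the exponential chart $y \mapsto \exp_{x_0}(y)$ on $U \subset T_{x_0}M$ and a local trivialization of $E$ near $x_0$ to write $\chi e_t s_\eta$ as $\tilde f(y) e^{it\langle y, \xi\rangle} \eta$ with $\tilde f \in \maC_c^\infty$ and $\tilde f(0) = 1$, and represent $P$ by a matrix-valued classical order-zero symbol $p(y, \zeta)$. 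The classical asymptotic expansion
\begin{equation*}
  e^{-it\psi(y_0)}\, p(y, D)\bigl(e^{it\psi} u\bigr)(y_0) \seq p\bigl(y_0, t \nabla\psi(y_0)\bigr) u(y_0) + O(t^{-1}),
\end{equation*}
applied with $\psi(y) = \langle y, \xi\rangle$ at $y_0 = 0$, then gives $p(0, t\xi)\eta + O(t^{-1}) \to \sigma_0(P)(\xi)\eta$ since $p$ is classical of order $0$ and $u(0) = \eta$.

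For the equivariant statement, observe that $e_t s_\eta$ is $G$-invariant by construction and hence lies in $L^2(M;E)_{\one}$; since $P$ is $G$-invariant, $P(e_t s_\eta)$ is also $G$-invariant and coincides with $\pi_{\one}(P)(e_t s_\eta)$. The only point requiring genuine care is the off-slice error estimate: $(1-\chi) e_t s_\eta$ is supported away from $x_0$ on the $G$-translated slices making up $W$, and rapid decay of the resulting integral at $x_0$ rests on the non-stationarity of $\langle y, \xi\rangle$ through the smooth off-diagonal kernel of $P$. This is a routine non-stationary phase estimate, so the substance of the proof lies in the classical symbol formula recalled above.
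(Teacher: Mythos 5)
The paper does not actually prove this proposition: it states that it follows from oscillatory testing techniques, citing \cite{Hormander3, Treves} for the method and \cite{BCN} for the proof. Your proposal is therefore a reconstruction rather than a comparison target, and it is essentially the standard argument the paper points to --- split off the singular support with a cutoff, kill the off-diagonal piece by non-stationary phase (using pseudolocality of $P$ and that the linear phase $\langle u,\xi\rangle$ has constant nonzero gradient on the slice directions), and apply the classical symbol expansion to the near-diagonal piece. The treatment of the equivariant part (observing $e_t s_\eta$ lies in $\maH_{\one}$ so $\pi_\one(P)(e_t s_\eta) = P(e_t s_\eta)$) is also correct.

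One imprecision worth fixing: you write the near-diagonal piece as $\tilde f(y)\,e^{it\langle y,\xi\rangle}\eta$ ``in the exponential chart on $U\subset T_{x_0}M$.'' But $U$ is a slice inside $(T_G M)_{x_0}$, which is a \emph{proper} subspace of $T_{x_0}M$ whenever the orbit $Gx_0$ has positive dimension, so $\exp_{x_0}(U)$ is only a submanifold, not a chart domain on $M$. The correct coordinates are the tube coordinates $(g,u)\mapsto g\exp_{x_0}(u)$ on $G/K\times U$, in which the test function is $\tilde f(g,u)\,e^{it\langle u,\xi\rangle}\,g\eta$: the amplitude carries a genuine $g$-dependence through $g\eta$ and the phase depends only on $u$. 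The oscillatory testing formula still applies since all one needs is the amplitude value at $(e,0)$, namely $\eta$, and the full gradient of the phase there, namely $(0,\xi)$ corresponding to $\xi\in T_{x_0}^*M$; this yields $p(x_0,t\xi)\eta + O(t^{-1}) \to \sigma_0(P)(\xi)\eta$. Your conclusion is unaffected, but as written the chart step would only be literally correct in the finite-group case where $U$ is open in $T_{x_0}M$.
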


\begin{remark}
Let $t>0$ and $V_0=\oid : E_{x_0}^K \rightarrow E_{x_0}^K$.
Let $V_t : E_{x_0}^K \rightarrow C^\infty(M,E)^G$ be the map given by $V_t(\eta)=e_t s_\eta$ 
and let $V_{-t}=\operatorname{ev}_{x_0} : C^\infty(M,E)^G \rightarrow E_{x_0}^K$ be the evaluation map at $x_0$.
Then we have $V_{-t}V_t = V_0 = \oid_{E_{x_0}^K}$ and
\begin{equation}\label{eq.limit.op}
\sigma_0(P)(\xi)=\lim \limits_{t\to \infty} V_{-t}\pi_\one(P)V_t : E^K_{x_0} \rightarrow E^K_{x_0}.
\end{equation}
Equation \eqref{eq.limit.op} is similar to the definition of limit operators, 
see \cite{chandler,favard,Ge,hagger2019limit,LindnerSeidel,Ma2,MaNi,RRS}.
\end{remark}

\section{Simonenko's general localization principle}
\label{ssub.Simonenko}
In this section, we recall the essentials of the usual Simonenko's localization
principle \cite{simonenko1965new}, see also \cite{SimonenkoNgok}. 
The results of this section are well-know from experts, we shall include proofs for the convenience of the reader.
We refer in particular to \cite[Chapter 2]{RRS} and \cite[Chapter 2]{LocalPrincipleBook}, where more general situations
are treated. 
 The general localization principle of this section will be used in the sequel
to deduce Fredholm conditions for restriction to isotypical components of invariant operators 
on closed manifolds.

Throughout this section, we let $T$ be a compact Hausdorff topological space 
and $\maC(T)$ be the $C^*$-algebra of complex valued continuous functions on $T$.
Let $A$ be a unital $C^*$-algebra and assume that $\maC(T)$ identifies with
 a unital sub-$C^*$-algebra in $A$, meaning, in particular,
that the image of the unit $1_Z$ of $\maC(T)$ is the unit $1_A$ of $A$. 

\begin{definition}\label{def.ssp}
An element $a \in A$ is said to {\em have the strong Simonenko local property} with
respect to $\maC(T)$ if, for every $\phi, \psi \in \maC(T)$ with compact disjoint
supports, we have $\phi a \psi = 0$.
\end{definition}
 
The following lemma follows for example from similar arguments 
as in \cite[Theorem 2.1.6]{RRS} and \cite[Theorem 2.5.6]{LocalPrincipleBook}.

\begin{lemma}\label{lemma:commute} 
The set $B \subset A$ of elements $a\in A$ satisfying the strong Simonenko
local property is the set of elements of $A$ commuting with $\maC(T)$.
\end{lemma}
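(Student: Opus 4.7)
The first implication is immediate: if $a$ commutes with every element of $\maC(T)$, and $\phi, \psi \in \maC(T)$ have disjoint supports (so that $\phi \psi = 0$), then $\phi a \psi = a \phi \psi = 0$.

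For the converse, suppose $a \in A$ satisfies the strong Simonenko local property; I must show $[\phi, a] = 0$ for every $\phi \in \maC(T)$. By splitting into real and imaginary parts and using linearity, it suffices to treat self-adjoint $\phi$. My plan is to pass to the enveloping von Neumann algebra $A^{**}$ and show that $a$ commutes with every spectral projection of $\phi$, from which commutativity with $\phi$ itself follows by the Borel functional calculus.

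Concretely, for each pair of real numbers $b < c$, I choose sequences $f_n, g_n \in \maC(\RR)$ with $f_n = 1$ on $(-\infty, b]$, $g_n = 1$ on $[c, \infty)$, and $\supp(f_n) \cap \supp(g_n) = \emptyset$ (possible since $b < c$). Then $f_n \circ \phi$ and $g_n \circ \phi$ belong to $\maC(T)$ with disjoint compact supports, so by the strong Simonenko local property $(f_n \circ \phi)\, a\, (g_n \circ \phi) = 0$. Letting $f_n(\phi) \to \chi_{\{\phi \le b\}}$ and $g_n(\phi) \to \chi_{\{\phi \ge c\}}$ in $\sigma$-weak topology in $A^{**}$, and using the separate $\sigma$-weak continuity of multiplication, I obtain
\begin{equation*}
\chi_{\{\phi \le b\}}\, a\, \chi_{\{\phi \ge c\}} \seq 0 \seq \chi_{\{\phi \ge c\}}\, a\, \chi_{\{\phi \le b\}}.
\end{equation*}
Sending $b \nearrow c$ then gives $\chi_{\{\phi < c\}}\, a\, \chi_{\{\phi \ge c\}} = 0$ and $\chi_{\{\phi \ge c\}}\, a\, \chi_{\{\phi < c\}} = 0$; since $\chi_{\{\phi < c\}} + \chi_{\{\phi \ge c\}} = 1_A$, a one-line computation yields $[\chi_{\{\phi < c\}}, a] = 0$ for every $c \in \RR$, and hence $[\phi, a] = 0$.

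The main technical obstacle is justifying the passage to the limit in $A^{**}$, in particular that $f_n(\phi)$ converges $\sigma$-weakly to $\chi_{\{\phi \le b\}}$ and that zero is preserved under the two limits (which is easy once separate continuity is invoked). An entirely self-contained alternative, closer in spirit to \cite[Chapter 2]{RRS} and \cite[Chapter 2]{LocalPrincipleBook}, would work directly inside $A$: for $\epsilon > 0$, fix a finite partition of unity $\{\chi_i\} \subset \maC(T)$ subordinate to an open cover on which $\phi$ oscillates by less than $\epsilon$, write $\tilde\phi = \sum_i \phi(t_i)\chi_i$, note that $\|\phi - \tilde\phi\|_\infty \le \epsilon$, and expand
\begin{equation*}
[\tilde\phi, a] \seq \sum_{i,j}\bigl(\phi(t_i)-\phi(t_j)\bigr)\,\chi_i\, a\, \chi_j,
\end{equation*}
observing that the strong Simonenko local property forces the summands with $\chi_i\chi_j = 0$ to vanish while the surviving coefficients satisfy $|\phi(t_i)-\phi(t_j)| \le 2\epsilon$; a Schur-type estimate then bounds $\|[\phi,a]\|$ by $O(\epsilon)\|a\|$, giving the conclusion as $\epsilon \to 0$.
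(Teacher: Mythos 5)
Your main argument---passing to the enveloping von Neumann algebra $A^{**}$ and proving that $a$ commutes with the spectral projections $\chi_{\{\phi<c\}}$ of each self-adjoint $\phi\in\maC(T)$---is sound and takes a genuinely different route from the paper's. The paper instead shows that the set $B$ of elements with the strong Simonenko local property is a $C^*$-subalgebra of $A$ containing $\maC(T)$, and then examines an arbitrary irreducible representation $\pi$ of $B$: from $\pi(\phi)\pi(a)\pi(\psi)=0$ for disjointly supported $\phi,\psi$ and all $a\in B$, one deduces (via density of $\pi(B)\pi(\psi)\eta$ when $\pi(\psi)\ne0$) that at most one of $\pi(\phi),\pi(\psi)$ is nonzero; this forces $\pi(\maC(T))\subset\CC 1_H$, and since irreducible representations separate the points of $B$, it follows that $\maC(T)$ lies in the center of $B$. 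Your version buys concreteness (you literally exhibit commuting projections) at the price of spectral theory in $A^{**}$; the paper's version stays inside $C^*$-theory and is arguably shorter. Do note that in your argument the two limits must be taken one after the other, using separate $\sigma$-weak continuity of multiplication, and the passage $b\nearrow c$ uses $\sigma$-strong convergence of the increasing net $\chi_{\{\phi\le b\}}$ to $\chi_{\{\phi<c\}}$; neither point is hard, but both should be said.

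The ``self-contained alternative'' you sketch has two gaps that need closing before it is a proof. First, the $\chi_i$ of a continuous partition of unity are not mutually orthogonal projections, so the classical Schur test does not directly apply to $\sum_{i,j}c_{ij}\chi_i a\chi_j$; one can still argue by Cauchy--Schwarz, using that the $\chi_i$ are self-adjoint with $\sum_j\chi_j^2\le\sum_j\chi_j=1$, but this has to be written out. Second, the resulting bound is of the form $2\epsilon N\|a\|$, where $N=\max_i\,\#\{j:\chi_i\chi_j\ne0\}$, and this is $O(\epsilon)$ only if $N$ stays bounded as $\epsilon\to0$. That bounded-overlap property is not automatic for an arbitrary finite partition subordinate to a cover on which $\phi$ oscillates by less than $\epsilon$ over a general compact Hausdorff $T$; it can be arranged by taking the cover to be preimages under $\phi$ of overlapping intervals of length comparable to $\epsilon$, so that only consecutive $\chi_i$ overlap and $N\le3$, but that choice must be made explicit. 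Since you present this route only as an alternative, these gaps do not affect your main argument, which is correct.
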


\begin{proof}
We are going to show that the set of elements $a \in A$ with the
strong Simonenko local property is a $C^*$-algebra $B$ containing
$\maC(T)$ and that every irreducible representation of $B$ restricts to a
scalar valued representation on $\maC(T)$, and hence that $\maC(T)$
commutes with $B$.

Let us show first that $B$ is a sub-$C^*$-algebra of $A$. Note that
$B$ is not empty since $\maC(T) \subset B$. To show that $B$ is a
sub-$C^*$-algebra, the only fact that is non-trivial to prove is that
$ab \in B$, for all $a,b \in B$. Let $\phi $ and $\psi \in \maC(T) $ with
disjoint compact supports and let $\theta$ be a function equal to $1$
on $\mathrm{supp}(\psi)$ and $0$ on $\mathrm{supp}(\phi)$, which
exists by Urysohn's lemma. Then we have
\begin{equation*}
  \phi ab\psi = \phi a(\theta +1-\theta)b \psi =\phi a \theta b\psi
  +\phi a(1-\theta )b\psi = 0,
\end{equation*}
since $\phi a \theta=0$ and $(1-\theta )b\psi=0$.

Let $\pi : B \rightarrow \mathcal{L}(H)$ be an irreducible
representation of $B$.  First, let us show that for any $\phi, \psi
\in \maC(T)$ with disjoint support, we either have $\pi(\phi)=0$ or
$\pi(\psi) =0$. Indeed we have $\pi(\phi)\pi(a)\pi(\psi)=0$ since
$\phi a \psi =0$, for any $a \in B$. Assume that $\pi(\psi) \neq 0$
then there is $\eta \in H$ such that $\pi(\psi)\eta \neq 0$. Now,
$\pi$ is irreducible so we get that the set $\{\pi(a)\pi(\psi)\eta
,\ a \in B \}$ is dense in $H$. Thus $\pi(\phi)=0$ on a dense subspace
of $H$ and so on $H$.

Assume now that $\pi(\maC(T)) \neq \CC 1_H$. Then there exist two distinct
characters $\chi_0, \chi_1 \in \Sp(\pi(\maC(T)))$. Denote by $h_\pi :
\Sp(\pi(\maC(T))) \to \Sp(\maC(T))=T$ the injective map adjoint to $\pi$, and choose
$\phi, \psi \in \maC(T)$ with disjoint supports such that
$\phi(h_\pi(\chi_0)) = 1$ and $\psi(h_\pi(\chi_1)) = 1$. Then
$\pi(\phi)(\chi_0) = 1$ and $\pi(\psi)(\chi_1) = 1$, which contradicts
the fact that either $\pi(\phi) = 0$ or $\pi(\psi) = 0$.
\end{proof}


We now fix notations and hypothesis that will remain valid until the end of this section.

\begin{NotHyp}\label{NotHyp}
As before, let $T$ be a compact Hausdorff topological space and denote by $\maC(T)$ the $C^*$-algebra of continuous functions on $T$.
Let $\maG$ be a Hilbert space 
and let $ \maC(T) \rightarrow \maL(\maG)$ be a non degenerate faithful representation 
({\em i.e.} $\maC(T)$ identifies with its image in $\maL(\maG)$ and
the image of the constant function $1$ is $\oid\in \maL(\maG)$). 
Assume that the image of $\maC(T)$ does not intersect $\maK(\maG)\smallsetminus \{0\}$. 
In other words, we are assuming that $\maC(T)$ identifies with a unital sub-$C^*$-algebra of
the Calkin algebra $\maQ(\maG):=  \maL(\maG)/\maK(\maG)$.
We shall denote by $M_\phi$ the image of a function $\phi\in \maC(T)$ 
in $\maL(\maG)$ and call it the multiplication operator by $\phi$.
\end{NotHyp}

\begin{remark}\label{rem.positive.measure}
If $X$ is a locally compact space and 
$\maG=L^2(X,\mu)$ then the representation of $\maC_0(X)$ is faithful 
if and only if $\mu$ is a strictly positive
measure, {\em i.e.} $\mu(U)>0$ for every open set $U \subset X$. In this case,
the only compact operator in $\maC_b(X)$ is zero, where $\maC_b(X)$ 
denotes the $C^*$-algebra of bounded continuous function, 
see Lemma \ref{lemma.cap.compact} below for more details. 
\end{remark}

We shall now turn to the definition of local invertibility.
The definition in the present paper and in for example \cite[Section 2.5]{LocalPrincipleBook}
are at the first reading not the same 
but they describe the same property by Lemma \ref{lemma:commute}. See also
\cite[Section 2.4.1]{LocalPrincipleBook}.

\begin{definition}\label{def.loc.inv}
An operator $P \in \maL(\maG)$ is said to be
{\em locally invertible} at $x\in T$ if there exist:
\begin{enumerate}[(i)]
  \item a neighbourhood $V_x$ of $x$ and 
  \item operators $Q_1^x$ and $Q_2^x \in \maL(\maG)$
\end{enumerate}
such that, for all $\phi \in \maC_c(V_x)$
\begin{equation*}
  Q_1^xPM_\phi \seq M_\phi \seq  M_\phi PQ_2^x \in \maL(\maG).
\end{equation*}
The operator $P$ is said to be {\em locally invertible on $T$} if it is
locally invertible at any $x\in T$.
\end{definition}

\begin{notation}\label{not.Simonenko}
We let $\Psi_T(\maG) \subset \maL(\maG)$ denote the $C^*$-algebra 
consisting of all $P \in \maL(\maG)$ such that $M_\phi P M_\psi \in 
\maK(\maG)$, for all $\phi, \psi \in \maC(T)$ with disjoint support. 
We let $\maB_T(\maG)$ denote the image of $\Psi_T(\maG)$ in the Calkin algebra 
$\maQ(\maG) $.\\
In other words, $\maB_T(\maG)=q(\Psi_T(\maG))$ where $q : \maL(\maG) \rightarrow \maQ(\maG)$ is the canonical projection.
\end{notation}

\begin{remark}\label{rem.PSI.T}
We know by Lemma \ref{lemma:commute} that
\begin{equation*}
  \maB_T(\maG)=\{P \in \mathcal{Q}(\maG) \mid \text{$M_\phi P =PM_\phi$ for all
  $\phi \in \maC(T)$}\}.
\end{equation*}
Said differently, $\Psi_T(\maG)$ is the essential commutant of $\maC(T)$, that is
\begin{equation*}
\Psi_T(\maG)=\{P \in \maL(\maG),\ M_\phi P - PM_\phi \in \maK(\maG),\ \forall \phi \in \maC(T)\},
\end{equation*}
see the relation with the work \cite{xia}. 
Moreover, the family of morphisms 
\begin{equation*}
\maB_T(\maG) \rightarrow \maB_T(\maG)/\ker(\operatorname{ev}_x)\maB_T(\maG),
\end{equation*} 
$x \in T$ is exhaustive, see \cite[Definition 3.1]{nistor2014exhausting} for the precise definition.
This follows from the definition of the central character map, see for example \cite[Remark 2.11]{BCLN2}. 
\end{remark}

I would like to thank an anonymous referee for pointing out to me 
how to simplify the proof of the next proposition and for the reference
 \cite[Proposition 2.2.3]{RRS} where a more general situation is treated.

\begin{proposition}\label{cor.loc.inv.implies.Fred}
Assume that $P \in \Psi_T(\maG)$ is locally invertible on $T$. Then $P$ is Fredholm.
\end{proposition}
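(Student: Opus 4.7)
The plan is to build a Fredholm parametrix for $P$ by pasting together the local inverses via a finite partition of unity on $T$, using compactness of $T$ and the essential commutation of $P$ with $\maC(T)$.

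First I would invoke compactness of $T$: for each $x\in T$, local invertibility provides a neighborhood $V_x$ and operators $Q_1^x,Q_2^x\in\maL(\maG)$ with $Q_1^xPM_\phi=M_\phi=M_\phi PQ_2^x$ for every $\phi\in\maC_c(V_x)$. Since $T=\bigcup_{x\in T}V_x$ and $T$ is compact Hausdorff, I would extract a finite subcover $V_{x_1},\dots,V_{x_n}$ and choose a subordinate partition of unity $\phi_1,\dots,\phi_n\in\maC(T)$ with $\supp(\phi_i)\subset V_{x_i}$ and $\sum_i\phi_i=1_T$, so that $\sum_i M_{\phi_i}=\oid_{\maG}$. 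Applying local invertibility at each $x_i$ to $\phi_i$ gives
\begin{equation*}
Q_1^{x_i}PM_{\phi_i}=M_{\phi_i}=M_{\phi_i}PQ_2^{x_i},\qquad i=1,\dots,n.
\end{equation*}

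Next I would use the hypothesis $P\in\Psi_T(\maG)$. By Remark \ref{rem.PSI.T} (which rests on Lemma \ref{lemma:commute}), this is equivalent to the essential commutation $[M_\phi,P]\in\maK(\maG)$ for every $\phi\in\maC(T)$. In particular, for each $i$ there is a compact operator $K_i$ with $PM_{\phi_i}=M_{\phi_i}P+K_i$. Summing the identity $M_{\phi_i}=Q_1^{x_i}PM_{\phi_i}$ over $i$, I obtain
\begin{equation*}
\oid_{\maG}=\sum_{i=1}^n Q_1^{x_i}PM_{\phi_i}=\Bigl(\sum_{i=1}^n Q_1^{x_i}M_{\phi_i}\Bigr)P+\sum_{i=1}^n Q_1^{x_i}K_i,
\end{equation*}
so $Q_\ell:=\sum_i Q_1^{x_i}M_{\phi_i}$ is a left parametrix: $Q_\ell P-\oid\in\maK(\maG)$. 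The same argument applied to $M_{\phi_i}=M_{\phi_i}PQ_2^{x_i}$, together with $M_{\phi_i}P=PM_{\phi_i}-K_i$, yields a right parametrix $Q_r:=\sum_i M_{\phi_i}Q_2^{x_i}$ with $PQ_r-\oid\in\maK(\maG)$.

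Since $P$ admits both a left and a right inverse modulo compacts, $P$ is Fredholm on $\maG$, which is exactly the statement of the proposition. The mild point of care — really the only step that uses anything beyond a standard parametrix patching — is the passage between $PM_{\phi_i}$ and $M_{\phi_i}P$, which is handled cleanly by the characterization of $\Psi_T(\maG)$ as the essential commutant of $\maC(T)$; without that, the sum $\sum_i Q_1^{x_i}PM_{\phi_i}=\oid$ would not reorganize into something of the form $(\text{operator})\cdot P$ modulo $\maK(\maG)$.
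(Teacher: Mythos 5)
Your proof is correct and follows essentially the same route as the paper: extract a finite cover by compactness, patch the local inverses against a subordinate partition of unity to form $Q_\ell=\sum_i Q_1^{x_i}M_{\phi_i}$ and $Q_r=\sum_i M_{\phi_i}Q_2^{x_i}$, and use the essential commutation $[P,M_{\phi_i}]\in\maK(\maG)$ (from Remark \ref{rem.PSI.T}) to move $P$ past the $M_{\phi_i}$ modulo compacts. The paper starts from $Q_1P$ and inserts the commutator, while you start from $\oid=\sum_i M_{\phi_i}$ and replace $PM_{\phi_i}$ by $M_{\phi_i}P+K_i$; these are the same algebraic identity read in opposite directions.
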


\begin{proof}
By assumption $P$ is locally invertible on $T$ therefore for any $x \in T$ there are open neighborhood $V_x$ 
and operators $Q_1^{x}$, $Q_2^x$ such that 
for all $\phi \in \maC_c(V_x)$,
\begin{equation*}
  Q_1^xPM_\phi \seq M_\phi \seq  M_\phi PQ_2^x \in \maL(\maG).
\end{equation*}
Since $T$ is compact, there are $x_1, \cdots , x_N$ such that $(V_{x_j})_{j=1}^N$ is a finite open cover of $T$.
Now let $(\phi_j)_{j=1}^{N}$ be a partition of unity subordinated to $(V_j)_{j=1}^N$ then
for all $j=1,\cdots ,N$, we have
\begin{equation*}
  Q_1^{x_j}PM_{\phi_j} \seq M_{\phi_j} \seq  M_{\phi_j} PQ_2^{x_j} \in \maL(\maG).
\end{equation*}  
It follows that $Q^1:= \sum_{j=1}^N  Q_1^{x_j} M_{\phi_j}$ and $Q^2 := \sum_{j=1}^N  M_{\phi_j} Q_2^{x_j} $
are respectively left inverse and right inverse of $P$ modulo compact operators. 
Indeed, if $[A,B]=AB-BA$ denotes the commutator, we have
\begin{align*}
Q_1 P &= \sum_{j=1}^N  Q_1^{x_j} M_{\phi_j} P= \sum_{j=1}^N  Q_1^{x_j} [M_{\phi_j} , P] + \sum_{j=1}^N  Q_1^{x_j} P M_{\phi_j}\\
&=\sum_{j=1}^N  Q_1^{x_j} [M_{\phi_j} , P] + \sum_{j=1}^N  M_{\phi_j}\\
&=\sum_{j=1}^N  Q_1^{x_j} [M_{\phi_j} , P] + \oid,
\end{align*} 
and similarly
\begin{equation*}
PQ_2 = \sum_{j=1}^N [P,M_{\phi_j}]Q_2^{x_j} +\oid.
\end{equation*}
Since $P \in \Psi_T(\maG)$, we know from Remark \ref{rem.PSI.T} that  $[P,M_{\phi_j}]$ is compact. 
Thus, $\sum_{j=1}^N  Q_1^{x_j} [M_{\phi_j} , P]$ and $\sum_{j=1}^N [P,M_{\phi_j}]Q_2^{x_j}$ are compact operators and
therefore $P$ is Fredholm 
%

\end{proof}

%


\begin{definition}\label{def.property.strong.CV}
We shall say that the representation $\maC(T) \rightarrow \maL(\maG)$ of Notations and Hypothesis \ref{NotHyp} 
has the property of strong convergence to $0$ if for any 
$x\in T$ 
$M_{\chi_V}$ converges strongly to zero, where $V$ runs the set of neighborhoods of $x$ 
and $\chi_V\in \maC(T,[0,1])$ is equal to $1$ on a neighborhood of $x$, with values in $[0,1]$
 and is supported in $V$.
Said differently, $\maC(T) \rightarrow \maL(\maG)$ as the property of strong convergence to $0$ if  
$\forall x\in T$, $\forall h\in \maG$, $\forall \varepsilon >0$, there is a neighborhood $V'$ of $x$
such that for any neighborhood $V$ of $x$, if $V\subset V'$ then $\|M_{\chi_V}h\|<\varepsilon$.
\end{definition}

\begin{proposition}[General Simonenko's localization principle]
\label{prop.simonenko.gen}
Let $P \in \Psi_T(\maG)$. 
Assume that $\maC(T) \rightarrow \maL(\maG)$ is as in Notation and Hypothesis \ref{NotHyp} 
and has the property of strong convergence to $0$, see Definition \ref{def.property.strong.CV}.
Then $P$ is locally invertible on $T$ if, and
only if, $P$ is Fredholm.
\end{proposition}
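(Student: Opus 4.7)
The direction ``locally invertible $\Rightarrow$ Fredholm'' is already Proposition~\ref{cor.loc.inv.implies.Fred}, so the plan is to prove only the converse. Fix a parametrix $R \in \maL(\maG)$ of $P$ with $RP = I + K_1$ and $PR = I + K_2$ for compacts $K_1, K_2 \in \maK(\maG)$. The idea is to upgrade $R$, near each $x \in T$, to a genuine local left and right inverse by a Neumann-series correction, rather than merely an inverse modulo compacts.

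The single extra input needed from Definition~\ref{def.property.strong.CV} is the following norm-convergence lemma: for every compact $K \in \maK(\maG)$, both $\|K M_{\chi_V}\|$ and $\|M_{\chi_V} K\|$ tend to $0$ as $V$ shrinks to $\{x\}$, uniformly over admissible bump functions $\chi_V$. This follows from the classical fact that an SOT-null uniformly bounded family converges uniformly on precompact sets such as $K(B_1)$; for the two-sided statement one uses the self-adjointness $M_{\chi_V}^* = M_{\chi_V}$ via $(K M_{\chi_V})^* = M_{\chi_V} K^*$. This SOT-to-norm upgrade is the crucial step, since it is exactly what promotes the Calkin-level commutation identity $q(R)\,q(P)\,q(M_\phi) = q(M_\phi)$ to an on-the-nose operator identity, which is what Definition~\ref{def.loc.inv} demands.

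For fixed $x$, normality of the compact Hausdorff space $T$ supplies nested neighborhoods $V_x \subset V_x^*$ with $\overline{V_x} \subset V_x^*$, and Urysohn's lemma yields $\chi \in \maC(T,[0,1])$ with $\chi = 1$ on $\overline{V_x}$ and $\supp(\chi) \subset V_x^*$. Since $\chi$ is an admissible bump $\chi_{V_x^*}$, the lemma lets me shrink $V_x^*$ (and hence $V_x$) until $\|K_1 M_\chi\| < 1/2$ and $\|M_\chi K_2\| < 1/2$, so that $I + K_1 M_\chi$ and $I + M_\chi K_2$ are invertible by Neumann series. I then set
\begin{equation*}
Q_1^x \ede (I + K_1 M_\chi)^{-1}\, R, \qquad Q_2^x \ede R\,(I + M_\chi K_2)^{-1}.
\end{equation*}

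To finish, for any $\phi \in \maC_c(V_x)$ the support of $\phi$ lies in $\overline{V_x} \subset \{\chi = 1\}$, so $M_\chi M_\phi = M_\phi = M_\phi M_\chi$, and a direct computation gives
\begin{equation*}
Q_1^x P M_\phi = (I + K_1 M_\chi)^{-1}\bigl(M_\phi + K_1 M_\chi M_\phi\bigr) = M_\phi,
\end{equation*}
together with the symmetric identity $M_\phi P Q_2^x = M_\phi$. This furnishes local invertibility of $P$ at $x$, and since $x \in T$ was arbitrary, on all of $T$.
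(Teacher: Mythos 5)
Your proof is correct and follows essentially the same strategy as the paper: take a global parametrix for the Fredholm operator $P$, and on a shrinking neighborhood of each $x$ use the strong-convergence hypothesis together with compactness to make the error term small in norm, then absorb it by a Neumann series to produce genuine local left/right inverses. The only cosmetic difference is that the paper conjugates the compact errors on both sides (arranging $\|M_\chi K M_\chi\|<1$ and taking $Q^x_2 = Q M_\chi (1+M_\chi K M_\chi)^{-1}$), whereas you use one-sided corrections $(I+K_1 M_\chi)^{-1}R$ and $R(I+M_\chi K_2)^{-1}$, which works just as well once you note the adjoint/self-adjointness argument to control $\|K_1 M_\chi\|$.
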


\begin{proof}
The first implication is exactly Proposition \ref{cor.loc.inv.implies.Fred}.\\
Let us prove the opposite implication.
That is, let us assume that $P$ is Fredholm and 
let us prove that $P$ is locally invertible at $x \in T$,
where $x$ is fixed, but arbitrary. 
To this end, let $Q \in \maL(\maG)$ be an inverse
modulo $\mathcal{K}(\maG)$ for $P$,
i.e.\ $PQ=\oid+K$ and $QP=\oid+K^\prime$, with
$K,K' \in \maK(\maG)$. Using Lemma 
\cite[Proposition 1.3.10]{Dixmier}, we can assume that
  $Q\in \Psi_T(\maG)$ if one desires. Let $\chi \in \maC(T)$ be equal
to $1$ on a neighbourhood $V_x$ of $x$, with values in $[0,1]$ and supported in a neighborhood $V_x'$.
Let $\phi
\in \maC_c(V_x)$ then
\begin{equation*}
\begin{gathered}
  M_\phi M_\chi PQM_\chi \seq M_\phi M_\chi^2+M_\phi M_\chi KM_\chi \qquad
\mbox{and} \\
 M_\chi Q P M_\chi M_\phi \seq M_\chi^2M_\phi + M_\chi
K^\prime M_\chi M_\phi \,.
\end{gathered}
\end{equation*}
Since $\phi$ is supported in $V_x$, we have
$\phi\chi=\phi$ and so
\begin{equation*}
  M_\phi PQ M_\chi  \seq M_\phi (1+  M_\chi K M_\chi)\quad
\mbox{and} \quad  M_\chi Q P M_\phi \seq (1+  M_\chi
K^\prime M_\chi)M_\phi \,.
\end{equation*}

As $V_x'$ becomes small, we have that $ M_\chi$
  converges strongly to $0$ because $\maC(T) \rightarrow \maL(\maG)$ 
  has the property of strong convergence to $0$, see Definition \ref{def.property.strong.CV}. 
  Since $K$ is compact, we obtain that
  $\| M_\chi K  M_\chi \| \to 0$. Thus, by choosing $V_x'$ small enough, we
  may assume that $\| M_\chi K M_\chi\| < 1$ and $\| M_\chi K'  M_\chi\| < 1$.

It follows that $(1+  M_\chi K M_\chi)$ and $(1+  M_\chi K^\prime M_\chi)$ are
invertible and this implies
\begin{equation*}
\begin{gathered}  
   M_\phi P\big(Q M_\chi(1+  M_\chi K M_\chi)^{-1}\big)
   \seq M_\phi\qquad \mbox{and}\\
    \big((1+  M_\chi K^\prime M_\chi)^{-1} M_\chi
   Q\big)PM_\phi \seq M_\phi \,,
   \end{gathered}
\end{equation*}
that is, $P$ is locally invertible. This completes the second
implication, and hence the proof.
\end{proof}



Simonenko's principle is then \cite{simonenko1965new}:

\begin{proposition}[Simonenko's principle] \label{prop.simonenko}
Let $M$ be a closed manifold, $E$ a hermitian vector bundle on $M$,
$\Psi_M(L^2(M,E))$ be as in \ref{not.Simonenko} and let
$P \in \Psi_M(L^2(M,E))$. We have that $P$ is locally invertible on $M$ if, 
and only if, it is Fredholm.
\end{proposition}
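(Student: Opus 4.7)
The plan is to specialize the general Simonenko localization principle (Proposition \ref{prop.simonenko.gen}) to the case $T = M$ and $\maG = L^2(M,E)$, equipped with the standard representation of $\maC(M)$ by multiplication operators on sections. Once the hypotheses of Notation and Hypothesis \ref{NotHyp} and Definition \ref{def.property.strong.CV} are checked in this setting, the conclusion follows immediately.

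First I would verify that $\maC(M)\hookrightarrow \maL(L^2(M,E))$, $\phi \mapsto M_\phi$, is a non-degenerate, faithful, unital representation whose image meets $\maK(L^2(M,E))$ only at $0$. Non-degeneracy and the unit condition are automatic since the constant function $1$ maps to $\oid$. Faithfulness is the observation of Remark \ref{rem.positive.measure}: the Riemannian volume is strictly positive on non-empty open subsets of $M$, so $M_\phi = 0$ forces $\phi = 0$. That no non-zero $M_\phi$ is compact follows from the cited Lemma \ref{lemma.cap.compact}; alternatively, one constructs infinitely many mutually orthogonal unit sections supported in disjoint small balls where $|\phi|$ is bounded below, contradicting compactness.

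Next I would verify the strong convergence to $0$ property. Given $x \in M$ and $h \in L^2(M,E)$, the finite measure $|h|^2\,d\mu$ satisfies $\int_V |h|^2\,d\mu \to 0$ as the neighborhood $V$ of $x$ shrinks to $\{x\}$, by continuity of the measure (the singleton $\{x\}$ has measure zero). Hence, for any $\varepsilon > 0$, one can choose a neighborhood $V'$ of $x$ with $\int_{V'} |h|^2\,d\mu < \varepsilon^2$, and then for every neighborhood $V \subset V'$ and every $\chi_V \in \maC(M,[0,1])$ supported in $V$,
\begin{equation*}
\|M_{\chi_V} h\|^2 \,\leq\, \int_V |h|^2\,d\mu \,<\, \varepsilon^2,
\end{equation*}
which is precisely the defining condition of Definition \ref{def.property.strong.CV}.

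With both hypotheses established, Proposition \ref{prop.simonenko.gen} applies verbatim and yields the equivalence between local invertibility on $M$ and Fredholmness for $P \in \Psi_M(L^2(M,E))$. No essential obstacle is anticipated, as the heavy lifting is already carried out in the general localization principle; the only content here is the elementary verification that the multiplication representation of $\maC(M)$ on $L^2$-sections of a hermitian bundle over a closed Riemannian manifold satisfies the abstract axioms of Section \ref{ssub.Simonenko}.
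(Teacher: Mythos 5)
Your proposal is correct and follows essentially the same route as the paper: specialize Proposition \ref{prop.simonenko.gen} to $T = M$, $\maG = L^2(M,E)$, and check the hypotheses of Notation and Hypothesis \ref{NotHyp} together with the strong convergence to $0$ property by observing that $\int_V |h|^2\,d\mu \to 0$ as the neighborhood $V$ shrinks. The paper's own proof is a one-line version of exactly this argument, citing Remark \ref{rem.positive.measure} for faithfulness and the absence of non-zero compact multiplication operators.
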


\begin{proof}
The hypothesis of  Proposition \ref{prop.simonenko.gen} are satisfied because if $h\in L^2(M,E)$ 
then $\int_{V} |h|^2 \mathrm{dvol}$ goes to $0$ when the volume of $V$ goes to $0$, see also Remark \ref{rem.positive.measure}. 
\end{proof}


\section{Equivariant local principle for closed manifolds}\label{sec.Simonenko.closed}
Let $G$ be a compact Lie group that we assume to act smoothly by isometries on a closed Riemannian manifold
$M$ as before. We shall denote, as before, by $\widehat{G}$ the set 
of isomorphism classes of irreducible unitary representations of $G$. 
Let $\mathcal{H} := L^{2}(M,E)$ and
let $\maH_\alpha \cong \alpha \otimes (\alpha^* \otimes \maH)^G$ be the $\alpha$-isotypical component associated to
$\alpha \in \widehat{G}$, as in the introduction and section \ref{sec.Preliminaries}.

Any $\phi \in \maC(M)^G$ acts
by multiplication on $\maH_\alpha$ and we shall
denote also by $M_\phi$ the induced multiplication operator, as in section \ref{ssub.Simonenko}.
Furthermore, the representation of $\maC(M/G)=\maC(M)^G$ given by the previous multiplication operator
on $\maH$ and $\maH_\alpha$ are non degenerate.

\begin{definition}\label{def.alpha.invertible}
We shall say that $P \in \maL(\maH)^{G}$ is {\em locally $\alpha$-invertible at
  $x\in M$} if $\pi_\alpha(P)$ is locally invertible 
  at $Gx \in M/G$, see Definition \ref{def.loc.inv}.
  
\end{definition}

We let $\Psi_M^G(\maH)$ denote the $G$-invariant elements in the
$C^*$-algebra $\Psi_M(\maH)$, which was defined in \ref{not.Simonenko},
in the previous subsection.
More precisely, using Remark \ref{rem.PSI.T}
\begin{equation}\label{eq.PsiMG}
\Psi_M^G(\maH)=\{P\in \maL(\maH)^G\ |\ [P,M_\phi]\in \maK(\maH),\ \forall \phi \in \maC(M)\}.
\end{equation}

Before tackling the Simonenko's equivariant localization principle, let us first justify our hypothesis with the following simple example.
\begin{example}
Let $M=G$ be our manifold with its standard action by translation. In this case, $T^*_GM=G \times \{0\}$ and then every $G$-invariant pseudodifferential operator $P$ is $G$-transversally elliptic. It follows that the restriction $\pi_\alpha(P)$ to any isotypical component is Fredholm. Let us then consider the null operator $0 : L^2(G) \rightarrow 0$. Clearly, the restriction to the isotypical component associated with the trivial representation $\one=\CC$ of $G$ is Fredholm. In other words, $0=\pi_\one(0) : L^2(G)^G=\CC \to 0$ is Fredholm. But obviously, $0=\pi_\one(0)$ is not locally $\one$-invertible. 
\end{example}

This pathological example arises from the fact that there are points $x\in M$ such that the slice at $x$ are discrete 
(and in fact on the whole space $M=G$ in the previous example). Nevertheless, we can extract such a pathological points using the following interesting fact.
\begin{lemma}\label{lemma.finite.component}
Let $X$ be a not necessarily compact $G$-manifold without boundary and let $x \in X$ be such that $(T^*_GX)_x=\{0\}$. Then the orbit of $x$ is a union of connected components of $X$ in bijection with the connected component of $G/G_x$.
\end{lemma}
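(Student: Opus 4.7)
The plan is to show that under the hypothesis $(T^*_GX)_x = \{0\}$ the orbit $Gx$ is both open and closed in $X$, and then identify it $G$-equivariantly with $G/G_x$.

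First I would unpack the hypothesis without using a metric. The cotangent space $(T^*_GX)_x$ consists of linear forms $\xi \in T^*_xX$ such that $\xi(Y_M(x)) = 0$ for every $Y \in \mathfrak{g}$, so by elementary linear algebra $(T^*_GX)_x = \{0\}$ is equivalent to saying that the infinitesimal action map
\begin{equation*}
\mathfrak{g} \longrightarrow T_xX, \qquad Y \longmapsto Y_M(x),
\end{equation*}
is surjective. Since the image of this map is exactly $T_x(Gx)$ (the tangent space at $x$ of the orbit, which is an immersed submanifold because $G$ is compact), we conclude $T_x(Gx) = T_xX$.

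Next, I would upgrade this to openness. The orbit $Gx$ is always a smooth submanifold (the map $G \to X$, $g \mapsto gx$, factors through an embedding $G/G_x \hookrightarrow X$ as $G$ is compact), and equality of tangent spaces at $x$ forces $Gx$ to contain an open neighbourhood of $x$ in $X$. By $G$-equivariance of the action, $Gx$ then contains an open neighbourhood of each of its points, so $Gx$ is open in $X$. On the other hand, $Gx$ is compact (continuous image of $G$) and $X$ is Hausdorff, so $Gx$ is closed. Being both open and closed, $Gx$ is a union of connected components of $X$.

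Finally, I would identify the components of the orbit with those of $G/G_x$. The orbit map induces a $G$-equivariant diffeomorphism $G/G_x \to Gx$: it is smooth, bijective, and has bijective differential at $eG_x$ by the surjectivity established above, hence is an immersion of equal dimensions and therefore a local diffeomorphism, which combined with bijectivity gives the diffeomorphism. The connected components of $Gx$ therefore correspond bijectively to those of $G/G_x$. There is no serious obstacle here; the only point that deserves a line is checking that the infinitesimal surjectivity at $x$ suffices to conclude that the orbit has full dimension at $x$, which follows from the standard description of $T_x(Gx)$ as the image of the infinitesimal action.
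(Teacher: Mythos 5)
Your proof is correct, but it takes a somewhat different route from the paper's. The paper invokes the slice theorem: from $(T^*_GX)_x = \{0\}$ it concludes that the only slice at $x$ is the singleton $S_x = \{x\}$, so the tube $G \times_{G_x} S_x \cong Gx$ is an open neighbourhood of the orbit; compactness of $Gx$ then gives closedness, hence the clopen conclusion. You instead observe directly that $(T^*_GX)_x = \{0\}$ means the annihilator of the image of the infinitesimal action $\mathfrak{g} \to T_xX$ is trivial, so this image — which is $T_x(Gx)$ — is all of $T_xX$; since the orbit of a compact group action is an embedded submanifold, equality of tangent spaces forces $Gx$ to be open, and compactness again gives closedness. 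The two arguments are close cousins, but yours bypasses the slice theorem in favour of the more elementary dimension count on the embedded orbit, which is arguably a cleaner way to extract just the openness you need (the slice theorem gives more — a tubular neighbourhood — than is required here). The identification of the components of $Gx$ with those of $G/G_x$ is handled the same way in both: the orbit map descends to a $G$-equivariant diffeomorphism $G/G_x \cong Gx$.
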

\begin{proof}
Since $(T^*_GX)_x=\{0\}$, we obtain that $S_x=\{x\}$ is the only slice at $x$. From the slice theorem, 
we deduce that the orbit $Gx\cong G\times_{G_x} \{x\}=G \times_{G_x} S_x$ is open but it is also 
compact. Therefore, $Gx$ is a union of connected components of $X$
 in bijection with the connected components of $G/G_x\cong Gx$.
\end{proof}

Consider the set of points 
\begin{equation}\label{eq.def.maP}
\maP:=\{x \in M, \ (T^*_GM)_x=\{0\}\}.
\end{equation} 
Then $M$ is the disjoint union of the closed manifolds $M\smallsetminus \maP$ and $\maP$. 
Indeed, $\maP$ is a union of clopen orbits and therefore it is also compact because $M$ is.
The same argument also implies that $M\smallsetminus \maP$ is a closed submanifold of $M$.

\begin{remark} The set $\maP$ will be empty for example in the following cases:
\begin{enumerate}
\item if $M$ is connected and not reduced to a single orbit,
\item if $\dim M > \dim G$,
\item in particular, if $M/G$ is an orbifold of dimension $>0$.
\end{enumerate}
\end{remark}
In other cases, we can use the following useful result.
\begin{lemma}\label{lemma.pathological:points}
Let $\maP=\{x \in M, \ (T^*_GM)_x=\{0\}\}$ be the clopen introduced in Equation \eqref{eq.def.maP}.
Let $\chi$ be the characteristic function of the clopen $M\smallsetminus \maP$. 
Let then $M_\chi$ be the multiplication operator by $\chi$. Let $P\in \Psi_M(\maH)$ then $P=P_1 + P_2 +P_3$ 
with $P_1=M_\chi P M_\chi \in \Psi_{M\smallsetminus \maP}(\maH)$, $P_2=M_{1-\chi} P M_{1-\chi}  \in \Psi_\maP(\maH)$ 
and $P_3=M_{\chi} P M_{1-\chi} +M_{1-\chi} P M_{\chi}   \in \maK(\maH)$. 
Furthermore, if $P\in \Psi_M^G(\maH)$ then $\pi_\alpha(P_2)$ is Fredholm for any $\alpha \in \widehat{G}$ 
and therefore $\pi_\alpha(P)$ is Fredohlm if, and only if, $\pi_\alpha(P_1)$ is.
\end{lemma}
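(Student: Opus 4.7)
The plan is to first verify the algebraic decomposition and the belonging statements for the three pieces, and then exploit a block-diagonal structure in which the ``pathological'' block acts on a finite-dimensional space and is therefore automatically Fredholm.

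First I would check that $\maP$ is clopen, so that $\chi$ is a smooth $G$-invariant function. Indeed, if $x \in \maP$ then $(T^*_GM)_x = \{0\}$, and Lemma \ref{lemma.finite.component} shows that the orbit $Gx$ is a union of connected components of $M$. Hence $\maP$ is a union of connected components of $M$; since $M$ is compact, this union is finite and $\maP$ is clopen. With $\chi \in \maC^\infty(M)^G$, the identity $\oid = M_\chi + M_{1-\chi}$ expanded inside $P = (M_\chi + M_{1-\chi}) P (M_\chi + M_{1-\chi})$ gives the decomposition $P = P_1 + P_2 + P_3$. The compactness of $P_3$ is immediate: $\chi$ and $1-\chi$ have disjoint supports and $P \in \Psi_M(\maH)$. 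For the belonging of $P_1$ and $P_2$ to the corresponding Simonenko algebras, I would observe that $M\setminus \maP$ and $\maP$ are clopen, so any pair of continuous functions with disjoint supports on one of them extends by zero to a pair on $M$ with disjoint supports, and the hypothesis on $P$ transfers directly.

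The key step is to show $\pi_\alpha(P_2)$ is Fredholm for every $\alpha \in \widehat{G}$, and I would argue this by showing that its ambient space is already finite-dimensional. Since $\maP$ is a finite union of orbits $G x_j \cong G/H_j$ with $H_j = G_{x_j}$, the equivariant bundle $E|_{G x_j}$ is isomorphic to $G \times_{H_j} E_{x_j}$, and Frobenius reciprocity (or the Peter--Weyl theorem) yields
\begin{equation*}
    L^2(G/H_j, G \times_{H_j} E_{x_j})_\alpha \;\cong\; \alpha \otimes (\alpha^* \otimes E_{x_j})^{H_j},
\end{equation*}
which is finite-dimensional. Summing over the finitely many orbits shows $L^2(\maP, E)_\alpha$ is finite-dimensional, whence $\pi_\alpha(P_2)$ is automatically Fredholm.

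Finally, the $G$-invariance of $\chi$ implies that $M_\chi$ commutes with the isotypical projection $p_\alpha$, so the decomposition
\begin{equation*}
    \maH_\alpha \;=\; L^2(M\smallsetminus \maP, E)_\alpha \,\oplus\, L^2(\maP, E)_\alpha
\end{equation*}
is $G$-equivariant, and $\pi_\alpha(P_1) \oplus \pi_\alpha(P_2)$ is block diagonal with respect to it. Since $\pi_\alpha(P_3)$ is compact and $\pi_\alpha(P_2)$ is Fredholm, $\pi_\alpha(P) = \pi_\alpha(P_1) \oplus \pi_\alpha(P_2) + \pi_\alpha(P_3)$ is Fredholm if and only if $\pi_\alpha(P_1)$ is. The only substantive step in the whole argument is the finite-dimensionality assertion; everything else reduces to the algebraic identities of the Simonenko formalism and the clopen nature of $\maP$.
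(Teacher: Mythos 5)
Your proof is correct and takes essentially the same approach as the paper: the key step in both is the Frobenius-reciprocity computation $L^2(G/H_j, G\times_{H_j} E_{x_j})_\alpha \cong \alpha \otimes (\alpha^*\otimes E_{x_j})^{H_j}$, yielding finite-dimensionality. You streamline slightly by observing directly that $L^2(\maP,E)_\alpha$ is finite-dimensional (hence $\pi_\alpha(P_2)$ is automatically Fredholm), whereas the paper first decomposes $P_2$ further over the finitely many orbits $\maP_i$ before applying the same Frobenius computation to each piece; this intermediate step is unnecessary and your version is a touch cleaner.
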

\begin{proof}
The first part is clear since $M_\chi$ and $M_{1-\chi}$ have disjoint supports. 
For the second part, decompose $\maP=\sqcup_{i=1}^N \maP_i$ into clopen orbits 
and let $\phi_i$ be the corresponding characteristic functions 
then as before we can write $P_2=\sum \limits_{i=1}^N M_{\phi_i}P_2 M_{\phi_i} + C$, 
where $C = \sum_{i\neq j} M_{\phi_i} P_2 M_{\phi_j}$ is compact 
because the supports of $\phi_i$ and $\phi_j$ are disjoint for $i\neq j$. 
The previous decomposition is in fact a decomposition into $G$-invariant operators since $\phi_i$ is $G$-invariant. 
Therefore, for every $ \alpha \in \widehat{G}$, $\pi_\alpha(P_2)$ is Fredholm if, 
and only if, $\pi_\alpha(M_{\phi_i}P_2 M_{\phi_i})$ is Fredholm for any $i$. 
Now notice that $\maP_i \cong G \times_{G_x} \{x\}$ for some $x \in \maP$ therefore $\forall \alpha \in \widehat{G}$,
\begin{align*}
L^2(\maP_i,E\vert_{\maP_i})_\alpha &\cong \alpha \otimes \bigg(\alpha^* \otimes L^2(\maP_i,E\vert_{\maP_i})\bigg)^G\\
& \cong \alpha \otimes \bigg(\alpha^* \otimes L^2(G \times_{G_x} \{x\},G \times_{G_x} E_x)\bigg)^G
&\cong \alpha \otimes \bigg( \alpha^* \otimes E_x\bigg)^{G_x}
\end{align*}
is finite dimensional. \\
It follows that there are no condition for the restriction $\pi_\alpha(P_2)$ to be Fredholm, $\forall \alpha \in \widehat{G}$. 
In other words, for every $ \alpha \in \widehat{G}$, $\pi_\alpha(P)$ is Fredholm if, and only if, $\pi_\alpha(P_1)$ is.
\end{proof}

\begin{remark}
Notice that the previous proof implies that the image of $\maC(M)^G$ in $\maL(\maH_\alpha)$ 
intersects $\maK(\maH_\alpha)$ when $\maP \neq \emptyset$. 
\end{remark}

Recall that if $x \in M$ then $W_x \cong G \times_{G_x} U_x$ denotes a tube around $x$ 
and $U_x$ a slice at $x$, see \cite[Section I. 5]{tomDieckTransBook}.
Moreover, we have a $G$-equivariant isomorphism of vector bundles $E \cong G \times_{G_x} (U_x \times E_x)$. 
The next lemma could also be deduced from \cite[Corollary 1.5]{Bruning78}.

\begin{lemma}\label{lem.L2.tube.non.zero}
Let $\alpha \in \widehat{G}$ and 
let $\maH_\alpha=L^2(M,E)_\alpha\cong \alpha \otimes L^2(M,E\otimes \alpha^*)^G$.
The subset 
\begin{equation}\label{eq.lem.N}
\maN_\alpha :=\big\{x\in M, \ \exists W_x,\ \text{such that } L^2(W_x,E)_\alpha=\{0\}\big\}
\end{equation}
 is a $G$-invariant clopen. 
\end{lemma}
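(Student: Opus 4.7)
The plan is to verify the three properties separately: $G$-invariance, openness, and closedness. The $G$-invariance is immediate since tubes $W_y$ are by definition $G$-invariant, so the same $W_y$ serves as a tube around any $g y$, and the $G$-action preserves $\alpha$-isotypical components. Openness follows from extension by zero: if $y \in \maN_\alpha$ is witnessed by a tube $W_y$ with $L^2(W_y, E)_\alpha = \{0\}$ and $z \in W_y$, then a smaller tube $W_z \subset W_y$ around $z$ exists because $W_y$ is a $G$-invariant open neighborhood of $Gz$, and the $G$-equivariant extension-by-zero map $L^2(W_z, E) \hookrightarrow L^2(W_y, E)$ restricts to an injection of $\alpha$-isotypical components, forcing $L^2(W_z, E)_\alpha = \{0\}$; hence $W_y \subset \maN_\alpha$.

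For closedness, my strategy is to show that $\maN_\alpha$ is a union of connected components of $M$. I would use the identification $L^2(W, E)_\alpha \cong \alpha \otimes L^2(W, E \otimes \alpha^*)^G$ together with the pointwise constraint $s(z) \in (E_z \otimes \alpha^*)^{G_z}$ almost everywhere, valid for any $s \in L^2(W, E \otimes \alpha^*)^G$. Since singular orbit strata are finite unions of smooth submanifolds of strictly lower dimension, they have measure zero, so the vanishing of such $s$ on the principal stratum already forces $s = 0$ a.e. Fix now a $G$-invariant connected component $C$ of $M$ with principal isotropy class $(K_C)$; I would then establish the dichotomy $C \subset \maN_\alpha$ or $C \cap \maN_\alpha = \emptyset$, according to whether $(E_z \otimes \alpha^*)^{K_C}$ vanishes on the principal stratum of $C$. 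In the first case, any $G$-invariant section of $E \otimes \alpha^*$ on any tube $W_y \subset C$ vanishes on $W_y \cap M_{(K_C)}$ and hence a.e.\ on $W_y$, so $L^2(W_y, E)_\alpha = \{0\}$. In the second case, for any tube $W_y$ in $C$ pick $z_0 \in W_y \cap M_{(K_C)}$ by density; the slice theorem $W_{z_0} \cong G \times_{K_C} V_{z_0}$ together with a $K_C$-invariant bump function $\phi$ on $V_{z_0}$ and a nonzero $v \in (E_{z_0} \otimes \alpha^*)^{K_C}$ gives a $K_C$-equivariant section $u \mapsto \phi(u) v$ on the slice that extends to a nonzero, compactly supported, $G$-invariant section of $E \otimes \alpha^*$ on $W_{z_0} \subset W_y$, so $L^2(W_y, E)_\alpha \neq \{0\}$.

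The main obstacle I anticipate is verifying that $(E_z \otimes \alpha^*)^{K_C}$ is genuinely locally constant along the principal stratum of $C$; this follows from the local triviality of the $K_C$-equivariant bundle $E$ along connected orbit strata, provided by the slice theorem, combined with the fact that $G$-translates identify the various conjugate isotropies. A subsidiary case to handle separately is when $C$ lies inside $\maP = \{x : (T^*_G M)_x = \{0\}\}$, i.e., $C$ reduces to a single $G$-orbit $Gx$; there the dichotomy specializes via Frobenius reciprocity $L^2(G/G_x, E)_\alpha \cong \alpha \otimes \Hom_{G_x}(\alpha, E_x)$ to the simpler criterion $(E_x \otimes \alpha^*)^{G_x} = 0$, which is consistent with the general analysis since the orbit has trivial slice in that case.
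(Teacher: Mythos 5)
Your proposal is correct, and the core mechanism is the same as the paper's: reduce the question of $L^2(W_y, E)_\alpha \neq \{0\}$ to the fiber criterion $(E_{z_0}\otimes\alpha^*)^{G_{z_0}}\neq 0$ at a principal-orbit point via the slice theorem, and build a nonzero compactly supported invariant section from a nonzero invariant fiber vector and a slice bump function. The main organizational difference is in the closedness step. The paper first observes that the ``$\exists W_x$'' in the definition may be replaced by ``$\forall W_x$'' (any two tubes yield isomorphic $L^2(U_x,E_x)^{G_x}$), reduces to $\alpha=\mathbf 1$, and then gives a purely \emph{local} argument: from $L^2(U_x,E_x)^{G_x}\neq 0$ it extracts a minimal $G_x$-isotropy $K$ on the slice and a nonzero $v\in E_x^K$, and propagates invertibility to all $y$ in $W_x$ by constructing a section from $v$, showing the complement of $\maN$ is open. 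You instead set up a \emph{global} dichotomy per $G$-invariant component $C$, governed by whether the (constant on $C_{(K_C)}$) dimension of $(E_z\otimes\alpha^*)^{G_z}$ vanishes; the ``nonvanishing'' branch is identical in substance to the paper's construction, but the ``vanishing'' branch requires the extra observation that singular strata have measure zero, which the paper avoids entirely. Your version makes the openness argument by extension-by-zero logically redundant once the dichotomy is in place. Both routes are sound; the paper's is leaner (no measure-theoretic input, no locally-constant-rank lemma), yours makes the underlying bundle-theoretic criterion more explicit.
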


\begin{proof}
Replacing $E$ with $E\otimes \alpha$, we see that we can assume that 
$\alpha $ is the trivial representation and therefore that $L^2(W_x,E)_\alpha = L^2(W_x,E)^G$.
Clearly, $\maN_\alpha$ is $G$-invariant. We shall denote simply $\maN_\alpha$ by $\maN$ is this proof
since we consider the trivial representation.

Notice now that 
\begin{equation}\label{eq.N}
\maN=\big\{x\in M, \ \forall W_x,\ L^2(W_x,E)^G=\{0\}\big\}
\end{equation}
because if $W_x$ and $W_x'$ are two tubes around $x \in M$
then
\begin{align*}
L^2(W_x,E)^{G}&\cong L^2(U_x,E_x)^{G_x}\cong L^2(U_x',E_x)^{G_x}\cong L^2(W_x',E_x)^G.
\end{align*}

Let us show that $\maN$ is open. Let $x \in \maN$. By definition, there is $W_x$ such that
$L^2(W_x,E)^G =\{0\}$. Let $y \in W_x$ and assume that there is a tube $W_y$ around $y$
such that $L^2(W_y,E)^G\neq \{0\}$. By $G$-invariance of $W_x$ and $W_y$, we see that
we can assume $W_y$ small enough such that $W_y \subset W_x$.
But then $\{0\}\neq L^2(W_y,E)^G \subset L^2(W_x,E)^G$ which contradicts the fact that $x \in \maN$.
Therefore, $W_x \subset \maN$ and $\maN$ is open.

We now show that $\maN$ is closed. Let $x \in M\smallsetminus \maN$. 
By Equation \eqref{eq.N}, there is $W_x$ such that $L^2(W_x,E)^G=L^2(U_x,E_x)^{G_x}\neq \{0\}$.
Let $K$ be a minimal isotropy subgroup for the linear action of $G_x$ on $U_x$, see \cite[Section 5]{tomDieckTransBook}.
Notice that $K$ acts trivially on $U_x$ by minimality.
Then we have $\{0\} \neq L^2(U_x,E_x)^{G_x} \subset L^2(U_x,E_x)^K=L^2(U_x,E_x^K)$.
It follows that there is $v \in E_x^K \smallsetminus \{0\}$.
Let $y \in U_x$ and denote by $W_y'\cong G_x \times_{G_y} U_y' \subset U_x$ a tube around $y$ in $U_x$. 
Denote by $W_{y(K)}'$ the principal orbit bundle of $W_y'$ that is the dense open subset of $W_y'$
given by the points with stabilizer conjugated with $K$ in $G_x$. 
Each point of $W_{y(K)}'$ has a neighborhood of the form $G_x/K \times V$ with $K$ acting trivially on $V$.
Let $s \in \maC(G_x/K \times V, E_x)$ be given by $s([g],z)=gv$.
The section $s$ does not depend on the representative of $[g]$ in $G_x$ because $v \in E_x^K$
and is clearly $G_x$-invariant.
If $f\in \maC_c(G_x/K \times V)^{G_x}$ is any compactly supported function then
$f s \in \maC_c(G_x/K \times V, E_x)^{G_x} \subset L^2(W_y',E_x)^{G_x}$.
Now if $W_y\cong G \times_{G_y} U_y$ is a tube around $y$ in $W_x$ then assuming $U_y'$ small enough we have
$G \times_{G_x} W_y' \subset  G \times_{G_y} U_y$ and thus 
$0\neq L^2(W_y',E_x)^{G_x} \cong L^2(G \times_{G_x} W_y',E)^G \subset L^2(W_y,E)^G$.
It follows that $y \in M \smallsetminus \maN$ and therefore $W_x \subset M\smallsetminus \maN$. 
In other words, $M\smallsetminus \maN$ is open. This complete the proof.

\end{proof}

\begin{remark}\label{rem.N.Fredholm}
Let $\maN_\alpha=\big\{x\in M, \ \exists W_x,\ \text{such that } L^2(W_x,E)_\alpha=\{0\}\big\}$ be the clopen defined in 
Lemma \ref{lem.L2.tube.non.zero}. We have $L^2(\maN_\alpha,E)_\alpha=\{0\}$ 
and therefore there is no condition on $L^2(\maN_\alpha,E)_\alpha$ for an operator to be Fredholm.
By a discussion similar to the one of Lemma \ref{lemma.pathological:points}, 
we see that $P\in \Psi_M(\maH)^G$ is such that $\pi_\alpha(P)$ is Fredholm if, and only, if
$\chi_{M\smallsetminus \maN_\alpha}P \chi_{M\smallsetminus \maN_\alpha}$ is Fredholm, where $\chi_{M\smallsetminus \maN_\alpha}$ 
denotes the characteristic function of $M\smallsetminus \maN_\alpha$.

Moreover, we see that if $\maN_\alpha$ is not empty then the image of $\maC(\maN_\alpha)^G$ in $\maL(\maH_\alpha)$
is $0$, i.e. $M_\phi=0$, $\forall \phi \in \maC(\maN_\alpha)^G$. 
It follows that the image of $\maC(M)^G$ is the same as the image of $\maC(M\smallsetminus \maN_\alpha)^G$. 

For example, let $G=SO(3)$, let $M=S^2_1 \sqcup S^2_1$ be the disjoint union of 
two spheres $S^2 \subset \RR^3$ with a trivial action on $S_1^2$ and the induced action from $\RR^3$ on $S_2^2$.
Let then $E=M \times \CC^3$ with the natural action on $\CC^3$. Then
$L^2(M,E)^G=L^2(S_2^2,\CC^3)^G \neq 0$. Indeed, $(\CC^3)^G=0$ because $\CC^3$ is an irreducible 
representation of $SO(3)$ therefore $L^2(S_1^2,\CC^3)^G=L^2(S_1^2,(\CC^3)^G)=0$. 
Moreover, the function $f(x)=x \in \RR^3 \subset \CC^3$ is $G$-invariant and belongs to $L^2(S_2^2,\CC^3)^G$.
Now if $\chi_{S^2_1}$ is the characteristic function of $S_1^2$ then $M_{\chi_{S^2_1}} : L^2(M,E)^G \rightarrow L^2(M,E)^G$ is zero. 
\end{remark}

\begin{lemma}\label{lemma.cap.compact}
Let $\maP$ be the clopen introduced in Equation \eqref{eq.def.maP} 
and let $\maN_\alpha$ be the clopen introduced in Equation \eqref{eq.lem.N}.
Let $\alpha \in \widehat{G}$ and let $\maH_\alpha=L^2(M,E)_\alpha\cong \alpha \otimes L^2(M,E\otimes \alpha^*)^G$.  
Let $f\in \maC(M\smallsetminus (\maP\cup \maN_\alpha))^G$ then $M_f \in \maK(\maH_\alpha)$ if, and only, if $f=0$.
\end{lemma}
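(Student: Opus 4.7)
The plan is to prove the non-trivial direction $M_f \in \maK(\maH_\alpha) \Rightarrow f = 0$ by contrapositive, producing an orthonormal sequence in $\maH_\alpha$ on which $M_f$ is uniformly bounded below in norm. The reverse implication $f = 0 \Rightarrow M_f = 0$ is immediate.

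Assume $f \neq 0$. First I would pick $x \in M \setminus (\maP \cup \maN_\alpha)$ with $f(x) \neq 0$ and, using $G$-invariance together with continuity, a $G$-invariant open neighbourhood $W$ of the orbit $Gx$, contained in $M \setminus (\maP \cup \maN_\alpha)$, on which $|f| \geq c > 0$. Next I would choose a tube $W_x \cong G \times_{G_x} U_x$ around $x$ with $W_x \subset W$. At this stage the hypothesis $x \notin \maP$ enters: it forces $(T^*_GM)_x \neq \{0\}$, so the slice $U_x$ has positive dimension and therefore admits an infinite family of pairwise disjoint, non-empty, $G_x$-invariant open subsets $(B_n)_{n \geq 1}$. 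Concretely, one can take points $p_n$ along a ray in the linear slice with pairwise distinct orbits $G_x \cdot p_n$ and, exploiting the Hausdorffness of $U_x/G_x$, choose small pairwise disjoint $G_x$-saturated neighbourhoods.

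Saturating, $W_n \ede G \cdot B_n \subset W$ is a pairwise disjoint sequence of $G$-invariant open subsets of $M \setminus \maN_\alpha$, each containing a tube around some $y_n \notin \maN_\alpha$. By Lemma \ref{lem.L2.tube.non.zero} (in its non-vanishing form) every tube around $y_n$ has non-trivial $\alpha$-isotypical $L^2$, hence $L^2(W_n, E)_\alpha \neq \{0\}$. Picking a unit vector $u_n \in L^2(W_n, E)_\alpha \subset \maH_\alpha$, the disjointness of supports makes $(u_n)$ orthonormal, so $u_n \rightharpoonup 0$ weakly, while $\|M_f u_n\| \geq c$ since $|f| \geq c$ on $W_n \supset \supp(u_n)$. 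This contradicts compactness of $M_f$, yielding $f \equiv 0$.

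The main obstacle is the joint use of both hypotheses: without $x \notin \maP$ the slice could reduce to a point (cf.\ Lemma \ref{lemma.finite.component}) and one would not have room to manufacture infinitely many disjoint $G$-invariant tubes inside $W$, while without $x \notin \maN_\alpha$ Lemma \ref{lem.L2.tube.non.zero} could not supply a non-trivial $\alpha$-component on each $W_n$. Both exclusions are therefore essential to the construction, which explains why the statement is formulated precisely on $M \setminus (\maP \cup \maN_\alpha)$.
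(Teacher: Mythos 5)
Your proof is correct, and it reaches the contradiction by a slightly different (and somewhat cleaner) route than the paper. Both arguments rest on the same two geometric inputs: since $x \notin \maP$ the slice $U_x$ has positive dimension and carries infinitely many pairwise disjoint $G_x$-invariant annuli, and since $x \notin \maN_\alpha$ every tube around such a point has non-trivial $\alpha$-isotypical $L^2$ (this is the content of Equation \eqref{eq.N} inside the proof of Lemma \ref{lem.L2.tube.non.zero}, rather than the literal lemma statement, which only asserts that $\maN_\alpha$ is clopen --- worth saying explicitly). The paper then compresses $M_f$ to $L^2(U_x,E_x)^{G_x}$, observes this compression is compact and invertible, deduces via the open mapping theorem that the space must be finite-dimensional, and contradicts that by producing the infinite family $\chi_i s$. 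You instead build an orthonormal sequence $(u_n)$ supported in the pairwise disjoint $G$-saturated tubes $W_n$ and note that $\|M_f u_n\| \geq c$ while $u_n \rightharpoonup 0$, contradicting compactness directly. Your version avoids the detour through the slice Hilbert space and the invertibility/open-mapping step; the paper's version localizes the contradiction inside one fixed slice and is closer to the ``annuli'' computation it wants to highlight. Both are valid, and the essential identification of which hypotheses ($x\notin\maP$ versus $x\notin\maN_\alpha$) power which step is spot on.
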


\begin{proof}
We may assume $\alpha$ to be the trivial representation $\one \in \widehat{G}$. 
Let $f \in \maC(M\smallsetminus (\maP\cup \maN_\alpha))^G$ be non zero such that $M_f $ is compact on $\maH_\one=L^2(M,E)^G$. 
Let then $W_x \cong G \times_{G_x} U_x $ be a tube on which $|f| >\varepsilon>0$. 
Denote by $\chi_{x}$ the characteristic function of $W_x$ and let $\widetilde{M_f}$ be the restriction of 
$M_f$ to $L^2(W_x,E)^G\cong L^2(U_x,E_x)^{G_x}$. Then $\widetilde{M_f}$ is invertible with inverse $M_{f\chi_{x}}$.
Thus by Banach open mapping theorem $\widetilde{M_f}$ is open but also compact therefore $L^2(U_x,E_x)^{G_x}$ is finite dimensional.
Notice that $L^2(U_x,E_x)^{G_x}\neq 0$ since $x \in M\smallsetminus \maN_\alpha$.    
Let then $s \in  L^2(U_x,E_x)^{G_x}$ be non zero.
For any $n \in \NN$, we can certainly find $n+1$ disjoint $G_x$-invariant annulus in $U_x$
such that the restriction of $s$ to each of this annulus is non zero
because the action is isometric and $\dim U_x >0$ since $x \in M \smallsetminus \maP$.
By considering the characteristic functions $\chi_i$ of this $n+1$ annulus, 
we get $n+1$ linearly independent functions $\chi_i s$, 
thus a contradiction.   
\end{proof}

Recall that we denote by $\maH=L^2(M,E)$.
Let $\alpha \in \widehat{G}$, let $P\in \maL(\maH)^G$ 
and recall that $\pi_\alpha(P) : \maH_\alpha \rightarrow \maH_\alpha$ is
the restriction of $P$ to the $\alpha$-isotypical component 
$\maH_\alpha \cong \alpha \otimes (\alpha^* \otimes \maH)^G$ of $\maH$.

\begin{proposition}[Simonenko's equivariant localization principle]
\label{prop.simonenko.equivariant}
Let $M$ be a closed $G$-manifold as before.
Let $\maP$ be as in Equation \eqref{eq.def.maP}
and let $\alpha \in \widehat{G}$.
Let $P \in \Psi_M^G(\maH)=\{P\in \maL(\maH)^G | [P,M_\phi] \in \maK(\maH), \forall \phi \in \maC(M)\}$. 
Then $P$ is locally $\alpha$-invertible on $M\smallsetminus \maP$ (see Definition \ref{def.alpha.invertible}) if, and
only if, $\pi_\alpha(P)$ is Fredholm.
\end{proposition}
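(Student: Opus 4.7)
The plan is to derive both implications from the general Simonenko principle, Proposition \ref{prop.simonenko.gen}, applied to $\pi_\alpha(P)$ on a suitably corrected Hilbert space. Two points require care: removing the clopen subsets $\maP$ and $\maN_\alpha$ so that the non-degeneracy and faithfulness hypotheses of \ref{NotHyp} can be met, and checking the strong convergence to $0$ property on the isotypical component.

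First, I would use Lemma \ref{lemma.pathological:points} together with Remark \ref{rem.N.Fredholm} to excise the clopen sets $\maP$ and $\maN_\alpha$. Setting $M' := M \smallsetminus (\maP \cup \maN_\alpha)$, which is clopen in $M$ and hence compact, the hypothesis $P \in \Psi_M^G(\maH)$ makes the off-diagonal pieces in the decomposition of $P$ along $M'$, $\maP$, and $\maN_\alpha$ compact (disjoint supports); the piece supported on $\maP$ is Fredholm on every isotypical component by the finite-dimensionality computation of Lemma \ref{lemma.pathological:points}, while the piece supported on $\maN_\alpha$ is annihilated by $\pi_\alpha$ since $L^2(\maN_\alpha,E)_\alpha = 0$. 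Hence $\pi_\alpha(P)$ is Fredholm if and only if $\pi_\alpha(M_{\chi_{M'}} P M_{\chi_{M'}})$ is Fredholm on $\maG := L^2(M', E)_\alpha$. Dually, every point of $\maN_\alpha$ is trivially locally $\alpha$-invertible (the multiplication operators vanish on a neighbourhood), so local $\alpha$-invertibility on $M \smallsetminus \maP$ is equivalent to local $\alpha$-invertibility at every point of $M'$.

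Next, I would apply Proposition \ref{prop.simonenko.gen} with the compact Hausdorff space $T := M'/G$, the Hilbert space $\maG$ above, and the multiplication representation $\maC(T) = \maC(M')^G \to \maL(\maG)$. This representation is unital and, by Lemma \ref{lemma.cap.compact}, meets $\maK(\maG)$ only at $0$, so it is faithful and fits into Notation and Hypothesis \ref{NotHyp}. Membership $\pi_\alpha(M_{\chi_{M'}} P M_{\chi_{M'}}) \in \Psi_T(\maG)$ follows from the commutator characterisation \eqref{eq.PsiMG} combined with Proposition \ref{prop.image}: for $\phi, \psi \in \maC(T)$ with disjoint supports, $M_\phi P M_\psi = [M_\phi, P] M_\psi$ is compact, and $\pi_\alpha$ sends compacts to compacts. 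Compatibility of the two local invertibility notions is built into Definition \ref{def.alpha.invertible}.

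The hard part, and where genuine analytic input is required, is verifying the strong convergence to $0$ property of Definition \ref{def.property.strong.CV} on $\maG$. The key geometric observation is that, by the very definition of $\maP$, every $x \in M'$ has a slice of positive dimension; consequently the orbit $Gx$ has zero volume in $M$ and admits $G$-invariant tubular neighbourhoods of arbitrarily small volume. Absolute continuity of the Lebesgue integral then gives $\int_V |h|^2 \, \mathrm{dvol} \to 0$ for every $h \in L^2(M,E)$ as such a tube $V$ shrinks to $Gx$, and this bound transfers immediately to the isotypical subspace $\maG$. Once this is established, Proposition \ref{prop.simonenko.gen} delivers both implications at once.
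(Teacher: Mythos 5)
Your proof is correct and follows essentially the same route as the paper's: excise the clopen sets $\maP$ and $\maN_\alpha$ using Lemma \ref{lemma.pathological:points} and Remark \ref{rem.N.Fredholm}, then apply Proposition \ref{prop.simonenko.gen} to $T = M'/G$ acting on $\maG = L^2(M',E)_\alpha$, using Lemma \ref{lemma.cap.compact} for faithfulness and disjointness from the compacts, and verifying strong convergence to $0$ via the positive slice dimension (so orbits have measure zero and tubes of arbitrarily small volume exist). The only cosmetic difference is that you spell out the containment $\pi_\alpha(\Psi_M^G(\maH)) \subset \Psi_{M'/G}(\maG)$ and the $L^2$-shrinkage estimate a bit more explicitly than the paper, which simply asserts both.
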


\begin{proof}
Let $\maN_\alpha$ be as in Equation \eqref{eq.lem.N}.
Notice that any operator is locally $\alpha$-invertible at $x \in \maN_\alpha$ as operator between the null vector space.
Similarly, on $\maN_\alpha$ the operator $\pi_\alpha(P)$ is Fredholm.

By Lemma \ref{lemma.pathological:points} and Remark \ref{rem.N.Fredholm}, 
we may replace $M$ with $M\smallsetminus (\maP \cup N)$ 
and assume that for any $x\in M$, $(T^*_GM)_x$ is not reduce to $\{0\}$ 
and that there is a tube $W_x$ around $x$ such that $L^2(W_x,E_x)_\alpha\neq \{0\}$. 
Under this hypothesis, we have that $\maC(M/G) \rightarrow \maL(\maH_\alpha)$ is faithful, non degenerate 
and does not intersect $\maK(\maH_\alpha)\smallsetminus \{0\}$, 
see Lemma \ref{lemma.cap.compact} 
and Notation and Hypothesis \ref{NotHyp}.
Moreover, $\maC(M/G) \rightarrow \maL(\maH_\alpha)$
has the property of strong convergence to $0$, see  Definition \ref{def.property.strong.CV}. 
Indeed, this is equivalent to say that the volume 
of the slice at $x$ goes to zero when it becomes small.
Let us now introduce the $C^*$-algebra $\Psi_{M/G}(\maH_\alpha)$ defined in \ref{not.Simonenko}.
Clearly, $\pi_\alpha(\Psi_M^G(\maH))$ is a sub-$C^*$-algebra of $\Psi_{M/G}(\maH_\alpha)$. 
Therefore, $\pi_\alpha(P) \in \pi_\alpha(\Psi_M^G(\maH)) \subset \Psi_{M/G}(\maH_\alpha)$ is Fredholm if, and only if, 
it is locally invertible on $M/G$.
By definition, $P$ is locally $\alpha$-invertible at $x\in M$ if, and only, if $\pi_\alpha(P)$ is locally invertible at $Gx \in M/G$, thus the result follows from Proposition \ref{prop.simonenko.gen}.  
%
\end{proof}

\begin{theorem}\label{thm.main.Simonenko}
Let $M$ be a closed $G$-manifold as before
and let $\maP$ be as in Equation \eqref{eq.def.maP}.
Let $P\in\psi^m(M;E_0,E_1)^G$ and $\alpha \in
\widehat{G}$. Then the following are equivalent:
\begin{enumerate}
  \item $\pi_\alpha(P) : H^s(M;E_0)_\alpha \to H^{s-m}(M;E_1)_\alpha$ is
    Fredholm for any $s \in \RR$,
  \item $P$ is transversally $\alpha$-elliptic (see Definition \ref{def.chi.ps}),
  \item $P$ is locally $\alpha$-invertible on $M\smallsetminus \maP$ (see Definition \ref{def.alpha.invertible}).
\end{enumerate}
\end{theorem}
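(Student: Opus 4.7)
The equivalence $(1) \Leftrightarrow (2)$ is already Theorem \ref{thm.main1}, so my plan focuses on establishing $(1) \Leftrightarrow (3)$, which I will deduce from the general Simonenko localization principle, Proposition \ref{prop.simonenko.equivariant}.

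The first step is to reduce to a bounded, order zero, $G$-invariant operator on a single bundle. Form $E := E_0 \oplus E_1$ and consider
\begin{equation*}
Q := \begin{pmatrix} 0 & P^* \\ P & 0 \end{pmatrix} \in \psi^m(M;E)^G.
\end{equation*}
Since $Q^2 = \operatorname{diag}(P^*P, PP^*)$, Fredholmness of $\pi_\alpha(Q)$ is equivalent to that of $\pi_\alpha(P)$; using $\sigma_m(P^*) = \sigma_m(P)^*$, transversal $\alpha$-ellipticity of $Q$ is equivalent to that of $P$; and local $\alpha$-invertibility transfers similarly via the block structure. Next, choose a positive, invertible, $G$-invariant, scalar elliptic operator $\Lambda \in \psi^1(M;E)^G$, for instance $\Lambda := (1+\Delta_E)^{1/2}$ built from a $G$-invariant Laplace type operator $\Delta_E$, and set $P_0 := \Lambda^{-m/2} Q \Lambda^{-m/2} \in \psi^0(M;E)^G$. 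Since $\Lambda^{m/2}$ is a $G$-equivariant isomorphism of Sobolev spaces, the three conditions of the theorem for $Q$ are equivalent to the corresponding conditions for $P_0$ regarded as a bounded operator on $\maH := L^2(M;E)$.

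Next, I verify $P_0 \in \Psi_M^G(\maH)$ in the sense of \eqref{eq.PsiMG}. For $\phi \in \maC^\infty(M)$, the commutator $[P_0, M_\phi] \in \psi^{-1}(M;E)$ is compact on $L^2$. From the estimate $\|[P_0, M_\phi]\| \leq 2\|P_0\|\|\phi\|_\infty$, the norm density of $\maC^\infty(M)$ in $\maC(M)$, and the norm closedness of $\maK(\maH)$, this compactness extends to all $\phi \in \maC(M)$. Proposition \ref{prop.simonenko.equivariant} applied to $P_0$ then gives that $P_0$ is locally $\alpha$-invertible on $M \smallsetminus \maP$ if and only if $\pi_\alpha(P_0)$ is Fredholm, finishing the equivalence $(1) \Leftrightarrow (3)$ once combined with the reduction.

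The most delicate point is verifying that condition $(3)$ really transports under the reductions $P \rightsquigarrow Q \rightsquigarrow P_0$: the local inverses modulo compact operators furnished by the Simonenko principle must be carried back through conjugation by $\Lambda^{m/2}$ and through the block matrix embedding. The key mechanism is that $\Lambda^{m/2}$ commutes with any multiplication operator $M_\phi$ modulo an operator of order $m/2 - 1$, which is compact between the appropriate Sobolev spaces, so local parametrices on the reduced side yield local parametrices on the original side up to compact error.
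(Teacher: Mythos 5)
Your proof takes essentially the same route as the paper's own (extremely terse) argument: cite Theorem~\ref{thm.main1} for $(1)\Leftrightarrow(2)$ and Proposition~\ref{prop.simonenko.equivariant} for $(1)\Leftrightarrow(3)$. What you add — and the paper omits — is the reduction from $P\in\psi^m(M;E_0,E_1)^G$ to a bounded order-zero operator $P_0$ on a single Hilbert space. This reduction is genuinely needed: Proposition~\ref{prop.simonenko.equivariant} is formulated for $\Psi_M^G(\maH)$ with $\maH=L^2(M,E)$, and Definition~\ref{def.alpha.invertible}, strictly read, only defines local $\alpha$-invertibility for $P\in\maL(\maH)^G$, so the theorem's statement $(3)$ is not literally covered without something like your passage to $Q$ and then to $P_0=\Lambda^{-m/2}Q\Lambda^{-m/2}$. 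Your verification that $P_0\in\Psi_M^G(\maH)$ by smooth approximation and norm closedness of $\maK(\maH)$ is correct, and the block-matrix argument for transferring Fredholmness is standard and fine.

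The one soft spot is your closing discussion of how condition $(3)$ transports under the reduction. You write that conjugation by $\Lambda^{m/2}$ yields local parametrices ``up to compact error,'' but Definition~\ref{def.loc.inv} demands the \emph{exact} identities $Q_1^xPM_\phi=M_\phi=M_\phi PQ_2^x$; a compact remainder is not obviously allowed. This does not sink the argument, but as written it is incomplete. Two clean ways to close the gap: either take $(3)$ for the order-$m$ operator to be \emph{defined} via the reduction, so there is nothing to transport; or observe that for elements of $\Psi_T(\maG)$ local invertibility is equivalent to Fredholmness by Proposition~\ref{prop.simonenko.gen}, hence both are stable under compact perturbations, which absorbs your compact error. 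Either fix is short, but one of them should be stated, since the exact-versus-modulo-compact distinction is precisely what Definition~\ref{def.loc.inv} hinges on.
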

\begin{proof}
The first equivalence is given by Theorem \ref{thm.main1}.  
Now Proposition \ref{prop.simonenko.equivariant}
implies that \textit{(1)} is equivalent to \textit{(3)}.
 \end{proof}

 In particular, we obtain the
following consequence of the localization principle.

\begin{corollary}
Let $P\in \op{m}^G$ be a $G$-transversally elliptic operator, see section \ref{sub.psdo}. Then $P$ is
locally $\alpha$-invertible on $M\smallsetminus \maP$ for any $\alpha \in \widehat{G}$, as in
Definition \ref{def.alpha.invertible}.
\end{corollary}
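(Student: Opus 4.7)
The plan is essentially to observe that this corollary is a direct consequence of the preceding machinery: $G$-transversal ellipticity is simply a stronger hypothesis than the one appearing in Theorem \ref{thm.main.Simonenko}, and the equivalence of conditions (2) and (3) in that theorem immediately delivers the conclusion.

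More concretely, I would proceed in two short steps. First, I would verify that any $G$-transversally elliptic operator $P \in \op{m}^G$ is automatically transversally $\alpha$-elliptic in the sense of Definition \ref{def.chi.ps}, for every $\alpha \in \widehat{G}$. Indeed, by definition of $G$-transversal ellipticity, the principal symbol $\sigma_m(P)(\xi) : E_{0\xi} \to E_{1\xi}$ is invertible for every nonzero $\xi \in T^*_GM$. Tensoring with the identity on $\alpha^*$ preserves invertibility, and since the construction is $G$-equivariant (and in particular $K_i$-equivariant on each component), the restriction
\begin{equation*}
\sigma_m(P)(\xi) \otimes \oid_{\alpha^*} : (E_{0\xi} \otimes \alpha^*)^{K_i} \rightarrow (E_{1\xi} \otimes \alpha^*)^{K_i}
\end{equation*}
of an invertible equivariant linear map to the $K_i$-fixed subspaces is again invertible, with inverse given by the restriction of $\sigma_m(P)(\xi)^{-1} \otimes \oid_{\alpha^*}$. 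This holds in particular for $\xi \in (S^*_GM_i)^{K_i}$, which is the hypothesis of transversal $\alpha$-ellipticity.

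Second, I would invoke Theorem \ref{thm.main.Simonenko}: the implication $(2) \Rightarrow (3)$ there asserts that transversal $\alpha$-ellipticity implies local $\alpha$-invertibility on $M \smallsetminus \maP$. Combining with the first step completes the proof. Alternatively, one could equally well route through Atiyah--Singer's Theorem \ref{Thm.atiyahGell}, which yields that $\pi_\alpha(P)$ is Fredholm for every $\alpha$, and then apply the equivalence $(1) \Leftrightarrow (3)$ in Theorem \ref{thm.main.Simonenko} instead.

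There is really no obstacle here: the entire content of the corollary is the observation that $G$-transversal ellipticity is a uniform-in-$\alpha$ strengthening of the transversal $\alpha$-ellipticity condition, and that the Simonenko-type local principle developed in this paper translates Fredholmness of $\pi_\alpha(P)$ into local $\alpha$-invertibility once the pathological locus $\maP$ (where the transverse cotangent space degenerates to zero) is removed. The only thing to keep in mind is precisely the role of $\maP$: on $\maP$ the spaces involved are finite-dimensional and the notion of local $\alpha$-invertibility is vacuous or ill-posed, which is exactly why the statement restricts to $M \smallsetminus \maP$.
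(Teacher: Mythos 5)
Your proposal is correct, and your \emph{alternative} route is exactly the paper's proof: the paper invokes Theorem \ref{Thm.atiyahGell} to get that $\pi_\alpha(P)$ is Fredholm, then applies Proposition \ref{prop.simonenko.equivariant} (the equivariant Simonenko principle, which is what underlies the equivalence $(1)\Leftrightarrow(3)$ in Theorem \ref{thm.main.Simonenko}). Your \emph{primary} route is a genuinely different, and equally valid, variant: instead of appealing to the classical Atiyah--Singer Fredholmness result, you verify directly at the symbol level that $G$-transversal ellipticity implies transversal $\alpha$-ellipticity --- using that $\sigma_m(P)(\xi)$ is $K_i$-equivariant for $\xi \in (S^*_GM_i)^{K_i}$ (since $K_i \subset G_\xi$), so that its invertibility passes to the $K_i$-fixed subspaces of $E_{j\xi}\otimes\alpha^*$ --- and then invoke $(2)\Rightarrow(3)$ of Theorem \ref{thm.main.Simonenko}. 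The symbol-level route makes explicit that $G$-transversal ellipticity is a uniform-in-$\alpha$ strengthening of the $\alpha$-transversal ellipticity condition, which is a cleaner conceptual picture; the paper's route is shorter but leans on the external Atiyah--Singer theorem rather than the paper's own machinery. Both correctly account for the exclusion of $\maP$.
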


\begin{proof}
Using Theorem \ref{Thm.atiyahGell} we obtain that $\pi_\alpha(P)$ is
Fredholm.  Therefore by Proposition \ref{prop.simonenko.equivariant}
$P$ is locally $\alpha$-invertible.
\end{proof}



\setlength{\baselineskip}{4.75mm}
\bibliographystyle{plain}

\bibliography{ref_Simonenko}

\end{document}